\documentclass[journal]{IEEEtran}

\usepackage[dvipsnames]{xcolor}
\usepackage{extarrows,paralist}
\usepackage{mhchem} 
\usepackage{siunitx} 
\usepackage{graphicx} 
\usepackage{amsmath,graphicx}
\usepackage{mathtools}

\usepackage{amsthm}
\newtheorem{lem}{Lemma}
\newtheorem{theorem}{Theorem}

\newtheorem{assump}{Assumption}

\usepackage[square, comma, sort&compress, numbers]{natbib}
\usepackage{mathtools}
\usepackage{graphicx}
\usepackage{float}
\usepackage{caption}
\usepackage{color}
\usepackage{placeins}
\usepackage{float}
\usepackage{tabularx,colortbl}
\usepackage[skip=2pt,font=footnotesize]{caption}
\usepackage{amsmath,subfigure}
\usepackage{listings}
\usepackage[outdir=./]{epstopdf}
\usepackage{amssymb}
\usepackage{multirow}
\usepackage{algorithm}
\usepackage{algpseudocode}

\usepackage{enumitem}
\allowdisplaybreaks

\makeatletter
\newcommand{\vast}{\bBigg@{4}}
\newcommand{\Vast}{\bBigg@{5}}
\makeatother

\def\mbb{\mathbb}
\def\mb{\mathbf}
\def\mc{\mathcal}

\begin{document}
	\title{Distributed Subgradient Projection Algorithm over Directed Graphs: Alternate Proof} 
	\author{Ran Xin, Chenguang Xi, and Usman A. Khan
		\thanks{
			The authors are with the ECE Department at Tufts University, Medford, MA; {\texttt{chenguang.xi@tufts.edu, khan@ece.tufts.edu}}. This work has been partially supported by an NSF Career Award \# CCF-1350264.}
	}
	
	\maketitle
	\begin{abstract}
		We propose Directed-Distributed Projected Subgradient (D-DPS) to solve a constrained optimization problem over a multi-agent network, where the goal of agents is to collectively minimize the sum of locally known convex functions. Each agent in the network owns only its local objective function, constrained to a commonly known convex set. We focus on the circumstance when communications between agents are described by a \emph{directed} network. The D-DPS combines surplus consensus to overcome the asymmetry caused by the directed communication network. The analysis shows the convergence rate to be $O(\frac{\ln k}{\sqrt{k}})$.
	\end{abstract}
	
	\textcolor{blue}{{\small\textit{\textbf{Disclaimer---}}This manuscript provides an alternate approach to prove the results in \textit{C. Xi and U. A. Khan, Distributed Subgradient Projection Algorithm over Directed Graphs, in IEEE Transactions on Automatic Control}. The changes, colored in blue, result into a tighter result in Theorem~1. }}
	
\section{Introduction}\label{s1}
We focus~\cite{c_Hooi-Tong} on distributed methods to solve constrained minimization of a sum of convex functions, where each component is known only to a specific agent in a multi-agent network. The formulation has applications in, e.g., distributed sensor networks,~\cite{distributed_Rabbit}, machine learning,~\cite{distributed_Cevher,distributed_Mateos}, and low-rank matrix completion,~\cite{distributed_Ling}. Most existing algorithms assume the information exchange over undirected networks, i.e., if agent~$i$ can send information to agent~$j$, then agent~$j$ can also send information to agent~$i$. In many other realistic scenarios, however, the underlying graph may be directed. In the following, we summarize related literature on distributed optimization over multi-agent networks, which is either undirected or directed. 

\textbf{Undirected Graphs:} The corresponding problem over undirected graphs can fall into either the primal or the dual formulation, the choice of which depends on the mathematical nature of the applications. Typical primal domain methods include~\cite{uc_Nedic,cc_nedic,cc_Duchi,cc_Johansson,srivastava2011distributed,ram2010distributed}, where a convergence rate $O(\ln k/\sqrt{k})$ is obtained due to the diminishing step-size. To accelerate the rate, Ref.~\cite{fast_Gradient} applies the Nesterov-based method, achieving $O(\ln k/k^2)$ with the Lipschitz continuous gradient assumption. A related algorithm, EXTRA,~\cite{EXTRA}, uses a constant step-size and the gradients of the last two iterates. The method converges linearly under a strong-convexity assumption. The main advantage of primal domain methods is their computational simplicity. Dual domain methods formulate the problem into a constrained model: at each iteration for a fixed dual variable, the primal variables are first solved to minimize some Lagrangian-related functions, then the dual variables are updated accordingly,~\cite{dual_Terelius}. The distributed Alternating Direction Method of Multipliers (ADMM),~\cite{ADMM_Mota,ADMM_Shi,ADMM_Wei,ADMM_jakovetic,bianchi2014stochastic}, modifies traditional dual domain methods by introducing a quadratic regularization term and provides an improvement in the numerical stability. The dual domain methods, including distributed ADMM, are often fast, but comes with a high computation burden. To overcome this, Refs.~\cite{ADMM_Hong, ADMM_Ling} approximate the distributed implementation of ADMM. The computational complexity is similar to the primal domain methods. Random Coordinate Descent Methods,~\cite{Necoara_CDA0,Necoara_CDA1}, are also used in the dual formulation, which are better suited when the dimension of data is very large. 

\textbf{Directed Graphs:} Recent papers,~\cite{opdirect_Nedic,opdirect_Xi,opdirect_Makhdoumi,opdirect_Xi2}, consider distributed optimization over directed graphs. Among them, Refs.~\cite{opdirect_Nedic,opdirect_Xi,opdirect_Makhdoumi} consider non-smooth optimization problems. Subgradient-Push,~\cite{opdirect_Nedic}, applies the push-sum consensus,~\cite{ac_directed,ac_directed0}, to subgradient-based methods. Directed-Distributed Subgradient Descent,~\cite{opdirect_Xi}, is another subgradient-based alternative, combining surplus consensus,~\cite{ac_Cai1}. Ref.~\cite{opdirect_Makhdoumi} combines the weight-balancing technique,~\cite{c_Hooi-Tong}, with the subgradient-based method. These subgradient-based method,~\cite{opdirect_Nedic,opdirect_Xi,opdirect_Makhdoumi}, restricted by diminishing step-sizes, converge at~$O(\ln k/\sqrt{k})$. A recent algorithm, DEXTRA,~\cite{opdirect_Xi2}, is a combination of push-sum and EXTRA. It converges linearly under the strong-convexity assumption on the objective functions. In contrast to this work, Refs.~\cite{opdirect_Nedic,opdirect_Xi,opdirect_Makhdoumi,opdirect_Xi2} all solve unconstrained problems.

The major contribution of this paper is to provide and analyze the \emph{constrained} protocol over \emph{directed} graphs, i.e., each agent is constrained to some convex set and the communication is directed. To these aims, we provide and analyze the Directed-Distributed Projected Subgradient (D-DPS) algorithm in this paper. It is worth mentioning that generalizing existing work on unconstrained problems over undirected graphs is non-trivial because of two reasons: (i) the non-expansion property of the projection operation is not directly applicable; and, (ii) the weight matrices cannot be doubly stochastic, due to which the information exchange between two agents is asymmetric. We treat this asymmetry by bringing in ideas from surplus consensus,~\cite{ac_Cai1,opdirect_Xi}. We show that D-DPS converges at $O(\ln k/\sqrt{k})$ for non-smooth functions.

\textbf{Notation:} We use lowercase bold letters to denote vectors and uppercase italic letters to denote matrices. We denote by $[A]_{ij}$ or $a_{ij}$ the $(i,j)$th element of a matrix,~$A$. An~$n$-dimensional vector with all elements equal to one (zero) is represented by~$\mb{1}_n$ ($\mb{0}_n$). The notation~$0_{n\times n}$ represents an~$n\times n$ matrix with all elements equal to zero, and $I_{n\times n}$ the $n\times n$ identity matrix. The inner product of two vectors~$\mb{x}$ and~$\mb{y}$ is~$\langle\mb{x},\mb{y}\rangle$. We use~$\|\mb{x}\|$ to denote the standard Euclidean norm of $\mb{x}$. For a function $f(\mb{x})$, we denote its subgradient at $\mb{x}$ by $\nabla f(\mb{x})$. Finally, we use $\mc{P}_{\mc{X}}[\mb{x}]$ for the projection of a vector $\mb{x}$ on the set $\mc{X}$, i.e.,  $\mc{P}_{\mc{X}}[\mb{x}]=\arg\min_{\mb{v}\in\mc{X}}\|\mb{v}-\mb{x}\|^2$.

\section{Problem Formulation and Algorithm}\label{s2}
Consider a strongly-connected network of~$n$ agents communicating over a \emph{directed} graph,~$\mc{G}=(\mc{V},\mc{E})$, where~$\mc{V}$ is the set of agents, and~$\mc{E}$ is the collection of ordered pairs,~$(i,j),i,j\in\mc{V}$, such that agent~$j$ can send information to agent~$i$. Define~$\mc{N}_i^{{\scriptsize \mbox{in}}}$ to be the collection of in-neighbors that can send information to agent~$i$. Similarly,~$\mc{N}_i^{{\scriptsize \mbox{out}}}$ is defined as the out-neighbors of agent~$i$. We allow both~$\mc{N}_i^{{\scriptsize \mbox{in}}}$ and~$\mc{N}_i^{{\scriptsize \mbox{out}}}$ to include the node~$i$ itself. In our case, $\mc{N}_i^{{\scriptsize \mbox{in}}}\neq\mc{N}_i^{{\scriptsize \mbox{out}}}$ in general. We focus on solving a constrained convex optimization problem that is distributed over the above multi-agent network. In particular, the network of agents cooperatively solve the following optimization problem:
\begin{align}
\mbox{P1}:\quad&\mbox{minimize }\quad f(\mb{x})=\sum_{i=1}^nf_i(\mb{x}),\qquad\mbox{subject to}\quad\mb{x}\in\mc{X},\nonumber
\end{align}
where each local objective function~$f_i:\mbb{R}^p\rightarrow\mbb{R}$ being convex, not necessarily differentiable, is only known by agent~$i$, and the constrained set, $\mc{X}\subseteq\mbb{R}^p$, is convex and closed.

The goal is to solve problem P1 in a distributed manner such that the agents do not exchange the objective function with each other, but only share their own states with their out-neighbors in each iteration. We adopt the following standard assumptions.
\begin{assump}\label{asp1}
The graph $\mc{G}=(\mc{V},\mc{E})$ is strongly-connected, i.e.,~$\forall i, j\in\mc{V}$, there exists a directed path from $j$ to $i$. 
\end{assump}
\noindent Assumption \ref{asp1} ensures that the information from all agents is disseminated to the whole network such that a consensus can be reached. For example, a directed spanning tree does not satisfy Assumption \ref{asp1} as the root of this tree cannot receive information from any other agent.
\begin{assump}\label{asp2}
	Each function,~$f_i$, is convex, but not necessarily differentiable. The subgradient,~$\nabla f_i(\mb{x})$, is bounded, i.e.,~$\|\nabla f_i(\mb{x})\|\leq B_{f_i}$, $\forall\mb{x}\in\mbb{R}^p$. With~$B=\max_i\{{B_{f_i}}\}$, we have for any~$\mb{x}\in\mbb{R}^{p}$,
	\begin{align}\label{grad}
	\left\|\nabla f_i(\mb{x})\right\|&\leq B,\qquad\forall i\in\mc{V}.
	\end{align}
\end{assump}
\begin{color}{blue}
	\begin{assump} \label{asp3}
		The optimal solution set, denoted by $\mc{X}^*$, is non-empty.
	\end{assump}
\end{color}

\subsection{Algorithm: D-DPS}
Let each agent,~$j\in\mc{V}$, maintain two vectors:~$\mb{x}_j^k$ and~$\mb{y}_j^k$, both in~$\mbb{R}^p$, where~$k$ is the discrete-time index. At the~$k+1$th iteration, agent~$j$ sends its state estimate,~$\mb{x}_j^k$, as well as a weighted auxiliary variable,~$b_{ij}\mb{y}_j^k$, to each out-neighbor\footnote{To implement this, each agent $j$ only need to know its out-degree, and set $b_{ij}=1/|\mc{N}_j^{{\scriptsize \mbox{out}}}|$. This assumption is standard in the related literature regarding distributed optimization over directed graphs,~\cite{opdirect_Nedic,opdirect_Xi,opdirect_Makhdoumi,opdirect_Xi2}}, $i\in\mc{N}_j^{{\scriptsize \mbox{out}}}$, where all those out-weights,~$b_{ij}$'s, of agent $j$ satisfy:

\begin{equation*}
b_{ij}=\left\{
\begin{array}{rl}
>0,&i\in\mc{N}_j^{{\scriptsize \mbox{out}}},\\
0,&\mbox{otw.},
\end{array}
\right.
\qquad
\sum_{i=1}^nb_{ij}=1.
\end{equation*}
Agent~$i$ then updates the variables,~$\mb{x}_i^{k+1}$ and~$\mb{y}_i^{k+1}$, with the information received from its in-neighbors,~$j\in\mc{N}_i^{{\scriptsize \mbox{in}}}$:
\begin{subequations}\label{alg1}
	\begin{align}
	\mb{x}_i^{k+1}&=\mc{P}_{\mc{X}}\left[\sum_{j=1}^na_{ij}\mb{x}_j^k+\epsilon\mb{y}_i^k-\alpha_k\nabla\mb{f}_i^k\right],\label{alg1a}\\
	\mb{y}_i^{k+1}&=\mb{x}_i^k-\sum_{j=1}^na_{ij}\mb{x}_j^k+\sum_{j=1}^n\left(b_{ij}\mb{y}_j^k\right)-\epsilon\mb{y}_i^k,\label{alg1b}
	\end{align}
\end{subequations}
where the in-weights,~$a_{ij}$'s, of agent $i$ satisfy that:
\begin{equation*}
a_{ij}=\left\{
\begin{array}{rl}
>0,&j\in\mc{N}_i^{{\scriptsize \mbox{in}}},\\
0,&\mbox{otw.},
\end{array}
\right.
\qquad
\sum_{j=1}^na_{ij}=1;
\end{equation*}
The scalar,~$\epsilon$, is a small positive constant, of which we will give the range later. The diminishing step-size,~$\alpha_k\geq0$, satisfies the persistence conditions:
$\sum_{k=0}^\infty\alpha_k=\infty;\sum_{k=0}^\infty\alpha_k^2<\infty;$ 
\begin{color}{blue}
	we also require $\alpha_k$ to be non-increasing, see e.g.,~\cite{opdirect_Nedic},
\end{color}and $\nabla \mb{f}_i^k=\nabla f_i(\mb{x}_i^k)$ represents the subgradient  of $f_i$ at $\mb{x}_i^k$. We provide the proof of D-DPS in Section \ref{s3}, where we show that all agents states converge to some common accumulation state, and the accumulation state converges to the optimal solution of the problem, i.e., $\mb{x}_i^\infty=\mb{x}_j^\infty=\mb{x}^\infty$ and $f(\mb{x}^\infty)=f^*$, $\forall i, j$, where $f^*$ denotes the optimal solution of Problem P1. To facilitate the proof, we present some existing results regarding the convergence of a new weighting matrix, and some inequality satisfied by the projection operator.
\subsection{Preliminaries}
Let $A=\left\{a_{ij}\right\}\in\mbb{R}^{n\times n}$ be some row-stochastic weighting matrix representing the underlying graph $\mc{G}$, and $B=\left\{b_{ij}\right\}\in\mbb{R}^{n\times n}$ be some column-stochastic weighting matrix regarding the same graph $\mc{G}$. Define $M\in\mbb{R}^{2n\times 2n}$ the matrix as follow.
\begin{align}
M&=\left[
\begin{array}{cc}
A & \epsilon I_{n\times n} \\
I_{n\times n}-A & B-\epsilon I_{n\times n} \\
\end{array}
\right],\label{M}
\end{align}
where $\epsilon$ is some arbitrary constant. We next state an existing result from our prior work,~\cite{opdirect_Xi} (Lemma 3), on the convergence performance of $M^\infty$. The convergence of $M$ is originally studied in~\cite{ac_Cai1}, while we show the geometric convergence in~\cite{opdirect_Xi}. Such a matrix $M$ is crucial in the convergence analysis of D-DPS provided in Section \ref{s3}.
\begin{lem}\label{lem_M2}
	Let Assumption \ref{asp1} holds. Let~$M$ be the weighting matrix,~Eq.~\eqref{M}, and the constant~$\epsilon$ in~$M$ satisfy~$\epsilon\in(0,\Upsilon)$, where~$\Upsilon:=\frac{1}{(20+8n)^n}(1-|\lambda_3|)^n$ and~$\lambda_3$ is the third largest eigenvalue of~$M$ by setting~$\epsilon=0$. Then:
	
	\begin{enumerate}[label=(\alph*)]
		\item The sequence of~$\left\{M^k\right\}$, as~$k$ goes to infinity, converges to the following limit:
		\begin{align}
		\lim_{k\rightarrow\infty}M^k=\left[
		\begin{array}{cc}
		\frac{\mb{1}_n\mb{1}_n^\top}{n} & \frac{\mb{1}_n\mb{1}_n^\top}{n} \\
		0 & 0 \\
		\end{array}
		\right];\nonumber
		\end{align}
		\item For all~$i,j\in[1,\ldots,2n]$, the entries~$\left[M^k\right]_{ij}$ converge at a geometric rate, i.e., there exist bounded constants,~$\Gamma\in\mbb{R^+}$, and~$\gamma\in(0,1)$, such that
		\begin{align}
		\left\|M^k-\left[
		\begin{array}{cc}
		\frac{\mb{1}_n\mb{1}_n^\top}{n} & \frac{\mb{1}_n\mb{1}_n^\top}{n} \\
		0 & 0 \\
		\end{array}
		\right]\right\|_{\infty}\leq\Gamma\gamma^k.\nonumber
		\end{align}
	\end{enumerate}
\end{lem}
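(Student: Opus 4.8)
Since the statement is quoted verbatim from \cite{opdirect_Xi,ac_Cai1}, one admissible route is a direct citation; nonetheless I would reconstruct it from the spectral structure of $M$. The first observation is that $M$ is \emph{column}-stochastic for every value of $\epsilon$: summing the first $n$ columns uses $\sum_i a_{ij}+\sum_i(I-A)_{ij}=1$, and summing the last $n$ columns uses $\epsilon+\sum_i b_{ij}-\epsilon=1$, invoking the row-stochasticity of $A$ and the column-stochasticity of $B$. Consequently $\mb{1}_{2n}^\top M=\mb{1}_{2n}^\top$, so $1$ is always an eigenvalue with left eigenvector $\mb{1}_{2n}$, and a direct check shows $[\mb{1}_n^\top,\mb{0}_n^\top]^\top$ is a corresponding right eigenvector, since $(I-A)\mb{1}_n=\mb{0}_n$ and $B\mb{0}_n=\mb{0}_n$. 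These two vectors already fix the candidate limit: the spectral projector onto a simple eigenvalue $1$ is $vu^\top/(u^\top v)$ with $v=[\mb{1}_n^\top,\mb{0}_n^\top]^\top$, $u=\mb{1}_{2n}$, and $u^\top v=n$, which evaluates exactly to the matrix claimed in part~(a).

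The work is therefore to prove that $1$ is a \emph{simple} eigenvalue and that every other eigenvalue has modulus strictly less than $1$, for $\epsilon\in(0,\Upsilon)$. I would begin at $\epsilon=0$, where $M$ is block lower-triangular, so its spectrum is $\mathrm{spec}(A)\cup\mathrm{spec}(B)$. Because the graph is strongly connected and, as is standard, equipped with positive self-weights, $A$ (row-stochastic) and $B^\top$ (row-stochastic) are primitive; by Perron--Frobenius each has a simple eigenvalue $1$ and all remaining eigenvalues inside the open unit disk. Hence at $\epsilon=0$ the eigenvalue $1$ has algebraic multiplicity $2$, it is \emph{semisimple} (its eigenspace is spanned by $[\mb{1}_n^\top,\mb{0}_n^\top]^\top$ and $[\mb{0}_n^\top,r^\top]^\top$, where $r$ is the Perron vector of $B$), and the third-largest eigenvalue satisfies $|\lambda_3|<1$, which is exactly the spectral gap appearing in $\Upsilon$.

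Next I would track the two branches of the double eigenvalue as $\epsilon$ grows from $0$. Semisimplicity guarantees the branches are differentiable at $\epsilon=0$, with first-order slopes given by the eigenvalues of the reduced $2\times 2$ matrix $W_0^\top M' V_0$, where $M'=\frac{d}{d\epsilon}M=\left[\begin{smallmatrix}0&I\\0&-I\end{smallmatrix}\right]$ and $V_0,W_0$ collect biorthonormal right/left eigenvectors at $1$. Column-stochasticity pins one branch at exactly $1$, so it suffices to check that the other slope is nonzero and directed into the unit disk; this simultaneously forces the persistent eigenvalue $1$ to be simple for small $\epsilon>0$. The remaining $2n-2$ eigenvalues, starting strictly inside the disk, stay inside by continuity. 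With a simple eigenvalue $1$ and spectral radius $\gamma<1$ on its complement, the Jordan form yields $M^k\to vu^\top/(u^\top v)$ at a geometric rate, giving part~(b) with some $\Gamma$ and $\gamma\in(0,1)$.

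The main obstacle is not the qualitative convergence but the \emph{explicit}, dimension-dependent threshold $\Upsilon=(20+8n)^{-n}(1-|\lambda_3|)^n$. Obtaining it requires a quantitative modulus-of-continuity bound for the eigenvalues as functions of $\epsilon$---e.g.\ a Bauer--Fike/Ostrowski-type estimate or a continuity-of-roots bound for the characteristic polynomial---in which the $n$-th power reflects the (up to) degree-$n$ clustering near $\lambda_3$ and near $1$, the factor $(1-|\lambda_3|)$ is the available gap the perturbation must not close, and the constant $20+8n$ arises from bounding the entries of $M$ and of the relevant eigenprojections by quantities linear in $n$. Pinning down this constant cleanly is the delicate step; everything else is bookkeeping on the spectrum.
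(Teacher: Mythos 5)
The paper itself does not prove this lemma: it is imported verbatim from \cite{opdirect_Xi} (Lemma 3 there), with the underlying convergence due to \cite{ac_Cai1}, so your first option---direct citation---is exactly the paper's own resolution, and your reconstruction follows the same spectral-perturbation route as those references: column-stochasticity of $M$ for every $\epsilon$, the block-triangular structure at $\epsilon=0$ giving $\mathrm{spec}(M)=\mathrm{spec}(A)\cup\mathrm{spec}(B)$ with a semisimple double eigenvalue at $1$, perturbation of the two unit branches, and identification of the limit with the rank-one spectral projector $v u^\top/(u^\top v)$, which indeed evaluates to the matrix in part~(a). Your algebraic checks (column sums, the right eigenvector $[\mb{1}_n^\top,\mb{0}_n^\top]^\top$, the eigenspace at $\epsilon=0$ spanned by $[\mb{1}_n^\top,\mb{0}_n^\top]^\top$ and $[\mb{0}_n^\top,\mb{r}^\top]^\top$ with $B\mb{r}=\mb{r}$) are correct, and your primitivity caveat is legitimate: strong connectivity gives only irreducibility, and one needs the positive self-weights $a_{ii}>0$, $b_{ii}>0$ (available here since each agent is in its own in- and out-neighborhood) to exclude periodicity. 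One step you merely assert can in fact be closed by direct computation: taking left eigenvectors $\mb{1}_{2n}$ and $[\boldsymbol{\pi}^\top,\mb{0}_n^\top]^\top$ with $\boldsymbol{\pi}^\top A=\boldsymbol{\pi}^\top$, one finds $M'v_1=\mb{0}_{2n}$ and $M'v_2=[\mb{r}^\top,-\mb{r}^\top]^\top$, so the reduced pencil $(W^\top V)^{-1}W^\top M'V$ has eigenvalues $0$ and $-n\,\boldsymbol{\pi}^\top\mb{r}/(\mb{1}_n^\top\mb{r})<0$: the persistent branch stays pinned at $1$ (hence $1$ is simple for small $\epsilon>0$) and the second branch moves along the negative real direction into the open unit disk, exactly as you claimed.

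The genuine gap is the explicit threshold. Continuity and first-order perturbation arguments deliver the conclusion only ``for all sufficiently small $\epsilon$,'' whereas the lemma asserts validity on the \emph{specified} interval $\epsilon\in(0,\Upsilon)$ with $\Upsilon=(20+8n)^{-n}(1-|\lambda_3|)^n$. In \cite{ac_Cai1} this is obtained from a quantitative root-continuity estimate for the characteristic polynomial---a coefficient perturbation of size $\delta$ can move roots by order $\delta^{1/n}$, which is why the gap $1-|\lambda_3|$ enters raised to the $n$-th power, while the constant $20+8n$ comes from explicit entrywise bounds on the coefficient perturbation induced by $\epsilon$. This is a self-contained estimate, not bookkeeping, and your sketch correctly locates where it must go but does not supply it; so as a standalone proof of the lemma as stated (which includes $\Upsilon$), your argument is incomplete at precisely this point, and the citation you offer first remains the appropriate---and the paper's own---way to discharge it.
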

The proof and related discussion can be found in~\cite{opdirect_Xi,ac_Cai1}. The next lemma regarding the projection operator is from~\cite{cc_nedic}.
\begin{lem}\label{lem_NonexpanBregman}
	Let~$\mc{X}$ be a non-empty closed convex set in~$\mbb{R}^p$. For any vector~$\mb{y}\in\mc{X}$ and~$\mb{x}\in\mbb{R}^p$, it satisfies:
	\begin{enumerate}[label=(\alph*)]
		\item~$\left\langle\mb{y}-\mc{P}_{\mc{X}}\left[\mb{x}\right],\mb{x}-\mc{P}_{\mc{X}}\left[\mb{x}\right]\right\rangle\leq 0$.
		\item~$\left\|\mc{P}_{\mc{X}}\left[\mb{x}\right]-\mb{y}\right\|^2\leq\left\|\mb{x}-\mb{y}\right\|^2-\left\|\mc{P}_{\mc{X}}\left[\mb{x}\right]-\mb{x}\right\|^2$.\nonumber
	\end{enumerate}
\end{lem}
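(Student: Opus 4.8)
The plan is to prove part~(a) as the first-order (variational) optimality condition for the Euclidean projection, and then deduce part~(b) from~(a) by a short algebraic expansion. To lighten notation I set $\mb{z}:=\mc{P}_{\mc{X}}[\mb{x}]$; since $\mc{X}$ is non-empty, closed, and convex, the minimizer in $\mc{P}_{\mc{X}}[\mb{x}]=\arg\min_{\mb{v}\in\mc{X}}\|\mb{v}-\mb{x}\|^2$ exists and is unique, so $\mb{z}$ is well defined.

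For part~(a), I would use the convexity of $\mc{X}$ to construct a feasible segment anchored at $\mb{z}$: for any $\mb{y}\in\mc{X}$ and $t\in[0,1]$, the point $\mb{z}+t(\mb{y}-\mb{z})$ lies in $\mc{X}$. Define $g(t):=\|\mb{z}+t(\mb{y}-\mb{z})-\mb{x}\|^2$, which by the definition of the projection is minimized over $[0,1]$ at $t=0$. Differentiating, $g'(0)=2\langle\mb{z}-\mb{x},\mb{y}-\mb{z}\rangle$, and minimality at the left endpoint forces $g'(0)\geq 0$, i.e., $\langle\mb{y}-\mb{z},\mb{x}-\mb{z}\rangle\leq 0$, which is exactly claim~(a).

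For part~(b), I would expand $\|\mb{x}-\mb{y}\|^2$ by inserting $\mb{z}$,
\begin{align}
\|\mb{x}-\mb{y}\|^2=\|\mb{x}-\mb{z}\|^2+2\langle\mb{x}-\mb{z},\mb{z}-\mb{y}\rangle+\|\mb{z}-\mb{y}\|^2.\nonumber
\end{align}
Rewriting part~(a) as $\langle\mb{x}-\mb{z},\mb{z}-\mb{y}\rangle\geq 0$ shows the cross term is non-negative; dropping it gives $\|\mb{x}-\mb{y}\|^2\geq\|\mb{x}-\mb{z}\|^2+\|\mb{z}-\mb{y}\|^2$, and rearranging yields $\|\mb{z}-\mb{y}\|^2\leq\|\mb{x}-\mb{y}\|^2-\|\mb{z}-\mb{x}\|^2$, i.e., claim~(b).

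The argument is essentially routine; the only step demanding care is the derivative computation in part~(a), where one must use the one-sided derivative at the boundary $t=0$ of the minimization interval to obtain the correct inequality direction. Beyond that, the main thing to watch is the sign bookkeeping in the inner products so that each is oriented exactly as in the statement.
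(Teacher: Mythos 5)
Your proof is correct and complete. Note, however, that the paper itself offers no proof of this lemma: it is imported verbatim from the cited reference (Nedi\'{c} et al.), so there is no in-paper argument to compare against. What you have written is precisely the standard argument found in that literature: part~(a) is the variational (first-order optimality) characterization of the Euclidean projection, obtained by perturbing along the feasible segment $\mb{z}+t(\mb{y}-\mb{z})$ and using the one-sided derivative condition $g'(0)\geq 0$ at the boundary minimizer, and part~(b) follows by expanding $\left\|\mb{x}-\mb{y}\right\|^2$ around $\mb{z}=\mc{P}_{\mc{X}}[\mb{x}]$ and discarding the non-negative cross term
\begin{equation}
\left\langle\mb{x}-\mb{z},\,\mb{z}-\mb{y}\right\rangle=-\left\langle\mb{y}-\mb{z},\,\mb{x}-\mb{z}\right\rangle\geq 0,\nonumber
\end{equation}
which is exactly the restatement of~(a) you invoke. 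Your sign bookkeeping is right at every step, and your remark on existence and uniqueness of the projection (coercive, strictly convex objective over a non-empty closed convex set in $\mbb{R}^p$) properly justifies that $\mb{z}$ is well defined, a point the paper takes for granted. In short: you have supplied a correct, self-contained proof of a result the paper only cites, and it is the canonical one.
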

\section{Convergence Analysis}\label{s3}
To analyze D-DPS, we write Eq.~\eqref{alg1} in a compact form. We denote~$\mb{z}_i^k\in\mbb{R}^p$,~$\mb{g}_i^k\in\mbb{R}^p$ as
\begin{align}
\mb{z}_i^k &= \left\{
\begin{array}{l r}
\mb{x}_i^k, ~~~~~~~~~~~~~ 1\leq i\leq n,&\\
\mb{y}_{i-n}^k, ~~~~ n+1\leq i\leq2n,&
\end{array}
\right.\notag\\
\mb{g}_i^k &=
\left\{
\begin{array}{l r}
\mb{x}_i^{k+1}-\sum\limits_{j=1}^na_{ij}\mb{x}_j^k-\epsilon\mb{y}_i^k,~~~~1\leq i\leq n,&\\
\mb{0}_p,~~~~~~~~~~~~~~~~~~~~~~~~~~ n+1\leq i\leq2n,&\\
\end{array} \right.\label{g}
\end{align}
and $A=\{a_{ij}\},B=\{b_{ij}\}$, and $M=\{m_{ij}\}$ collect the weights from Eqs.~\eqref{alg1} and~\eqref{M}. We now represent Eq.~\eqref{alg1} as follows: for any~$i\in\{1,...,2n\}$, at~$k+1$th iteration,
\begin{align}\label{alg2}
\mb{z}_i^{k+1}=\sum_{j=1}^{2n}m_{ij}\mb{z}_j^{k}+\mb{g}_i^k,
\end{align}
where we refer to~$\mb{g}_i^k$ as the \textit{perturbation}. Eq.~\eqref{alg2} can be viewed as a distributed subgradient method,~\cite{uc_Nedic}, where the doubly stochastic matrix is substituted with the new weighting matrix,~$M$, Eq.~\eqref{M}, and the subgradient is replaced by the perturbation,~$\mb{g}_i^k$. We summarize the spirit of the upcoming convergence proof, which consists of proving both the consensus property and the optimality property of D-DPS. As to the consensus property, we show that the disagreement between estimates of agents goes to zero, i.e., $\lim_{k\rightarrow\infty}\|\mb{x}_i^k-\mb{x}_j^k\|=0$, $\forall i, j\in\mc{V}$. More specifically, we show that the limit of agent estimates converge to some accumulation state, $\overline{\mb{z}}^k=\frac{1}{n}\sum_{i=1}^{2n}\mb{z}_i^k$, i.e., $\lim_{k\rightarrow\infty}\|\mb{x}_i^k-\overline{\mb{z}}^k\|=0$, $\forall i$, and the agents additional variables go to zero, i.e., $\lim_{k\rightarrow\infty}\|\mb{y}_i^k\|=0$, $\forall i$. Based on the consensus property, we next show the optimality property that the difference between the objective function evaluated at the accumulation state and the optimal solution goes to zero, i.e., $\lim_{k\rightarrow\infty}f(\overline{\mb{z}}^k)=f^*$.

We formally define the accumulation state $\overline{\mb{z}}^k$ as follows,
\begin{align}\label{z}
\overline{\mb{z}}^k=\frac{1}{n}\sum_{i=1}^{2n}\mb{z}_i^k=\frac{1}{n}\sum_{i=1}^{n}\mb{x}_i^k+\frac{1}{n}\sum_{i=1}^{n}\mb{y}_i^k.
\end{align}
The following lemma regarding~$\mb{x}_i^k$,~$\mb{y}_i^k$, and~$\overline{\mb{z}}^k$ is straightforward. We assume that all of the initial states of agents are zero, i.e.,~$\mb{z}_i^0=\mb{0}_p$,~$\forall i$, for the sake of simplicity in the representation of proof. 
\begin{lem}\label{lem_consensus}
	Let Assumptions \ref{asp1}, \ref{asp2} hold. Then, there exist some bounded constants,~$\Gamma>0$ and~$0<\gamma<1$, such that:
	\begin{enumerate}[label=(\alph*)]
		\item for all $i\in\mc{V}$ and $k\geq 0$, the agent estimate satisfies\footnote{In this paper, we allow the notation that the superscript of sum being smaller than its subscript. In particular, for any sequence $\{\mb{s}_k\}$, we have $\sum_{k=k_1}^{k_2}\mb{s}_k=0$, if $k_2<k_1$. Besides, we denote in this paper for convenience that $\mb{g}_i^{-1}=\mb{0}_p$, $\forall i$}
		\begin{align}
		\left\|\mb{x}_i^k-\overline{\mb{z}}^k\right\|\leq&\Gamma\sum_{r=1}^{k-1}\gamma^{k-r}\sum_{j=1}^n\left\|\mb{g}_j^{r-1}\right\|+\sum_{j=1}^n\left\|\mb{g}_j^{k-1}\right\|;\nonumber
		\end{align}
		
		\item for all $i\in\mc{V}$ and $k\geq 0$, the additional variable satisfies
		\begin{align}
		\left\|\mb{y}_i^k\right\|\leq\Gamma\sum_{r=1}^{k-1}\gamma^{k-r}\sum_{j=1}^n\left\|\mb{g}_j^{r-1}\right\|.\nonumber
		\end{align}
	\end{enumerate}
\end{lem}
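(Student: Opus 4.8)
The plan is to unroll the linear recursion~\eqref{alg2} and exploit the limiting structure of $M^k$ from Lemma~\ref{lem_M2}. With zero initial states, $\mb{z}_i^0=\mb{0}_p$, iterating~\eqref{alg2} gives, for every $i\in\{1,\ldots,2n\}$,
\begin{align}
\mb{z}_i^k=\sum_{r=0}^{k-1}\sum_{j=1}^{2n}\left[M^{k-1-r}\right]_{ij}\mb{g}_j^r.\nonumber
\end{align}
The first thing I would establish is the conservation identity $\mb{1}_{2n}^\top M=\mb{1}_{2n}^\top$: writing $\mb{1}_{2n}^\top=[\mb{1}_n^\top,\mb{1}_n^\top]$ and using the block form~\eqref{M}, the first block column yields $\mb{1}_n^\top A+\mb{1}_n^\top(I-A)=\mb{1}_n^\top$ and the second yields $\epsilon\mb{1}_n^\top+\mb{1}_n^\top(B-\epsilon I)=\mb{1}_n^\top B=\mb{1}_n^\top$, the last step using column-stochasticity of $B$. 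Hence $\mb{1}_{2n}^\top M^m=\mb{1}_{2n}^\top$ for all $m$, and since $\mb{g}_j^r=\mb{0}_p$ for $j>n$,
\begin{align}
\overline{\mb{z}}^k=\frac{1}{n}\mb{1}_{2n}^\top\mb{z}^k=\frac{1}{n}\sum_{r=0}^{k-1}\sum_{j=1}^{n}\mb{g}_j^r.\nonumber
\end{align}
Denoting by $M^\infty$ the limit matrix in Lemma~\ref{lem_M2}(a), note $[M^\infty]_{ij}=\tfrac1n$ for $i\leq n$, so $\overline{\mb{z}}^k=\sum_{r=0}^{k-1}\sum_{j=1}^{2n}[M^\infty]_{ij}\mb{g}_j^r$ for any $i\leq n$; this is the bridge that lets me write the disagreement as a single telescoped sum of $M^{k-1-r}-M^\infty$.

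For part~(a), with $\mb{x}_i^k=\mb{z}_i^k$ ($i\leq n$) I would subtract the two displays above and isolate the $r=k-1$ term, where $M^{0}=I$ contributes $\mb{g}_i^{k-1}$ to $\mb{x}_i^k$ and $\tfrac1n\sum_{j=1}^n\mb{g}_j^{k-1}$ to $\overline{\mb{z}}^k$:
\begin{align}
\mb{x}_i^k-\overline{\mb{z}}^k=\sum_{r=0}^{k-2}\sum_{j=1}^{2n}\left(\left[M^{k-1-r}\right]_{ij}-[M^\infty]_{ij}\right)\mb{g}_j^r+\left(\mb{g}_i^{k-1}-\frac{1}{n}\sum_{j=1}^n\mb{g}_j^{k-1}\right).\nonumber
\end{align}
The first sum is handled by Lemma~\ref{lem_M2}(b): since $\|M^m-M^\infty\|_\infty\leq\Gamma\gamma^m$ bounds each entry, and $\mb{g}_j^r=\mb{0}_p$ for $j>n$, its norm is at most $\sum_{r=0}^{k-2}\Gamma\gamma^{k-1-r}\sum_{j=1}^n\|\mb{g}_j^r\|$, which after the substitution $r\mapsto r-1$ becomes $\Gamma\sum_{r=1}^{k-1}\gamma^{k-r}\sum_{j=1}^n\|\mb{g}_j^{r-1}\|$. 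The residual term I would bound by rewriting $\mb{g}_i^{k-1}-\tfrac1n\sum_{j}\mb{g}_j^{k-1}=\tfrac1n\sum_{j\neq i}(\mb{g}_i^{k-1}-\mb{g}_j^{k-1})$ and applying the triangle inequality, which yields $\tfrac{n-1}{n}\sum_{j=1}^n\|\mb{g}_j^{k-1}\|\leq\sum_{j=1}^n\|\mb{g}_j^{k-1}\|$, matching the stated last term exactly.

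Part~(b) is structurally the same but cleaner, and here the key observation does the work: for $\mb{y}_i^k=\mb{z}_{n+i}^k$ we have $[M^\infty]_{(n+i)j}=0$, so $\mb{y}_i^k=\sum_{r=0}^{k-1}\sum_j([M^{k-1-r}]_{(n+i)j}-[M^\infty]_{(n+i)j})\mb{g}_j^r$, and the $r=k-1$ term is $\sum_j[I]_{(n+i)j}\mb{g}_j^{k-1}=\mb{g}_{n+i}^{k-1}=\mb{0}_p$ because the perturbation vanishes on the lower block. Thus the sum truncates at $r=k-2$ and the same geometric estimate gives $\|\mb{y}_i^k\|\leq\Gamma\sum_{r=1}^{k-1}\gamma^{k-r}\sum_{j=1}^n\|\mb{g}_j^{r-1}\|$ with no extra term. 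I expect the main obstacle to be purely bookkeeping: correctly peeling off the $M^0=I$ term and carrying out the index shift so the geometric factor lands as $\gamma^{k-r}$, together with verifying that the two special features of the last step---the averaging residual for the $\mb{x}$-block and the exact cancellation for the $\mb{y}$-block---reproduce the asymmetry between~(a) and~(b) in the statement.
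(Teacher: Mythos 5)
Your proposal is correct and follows essentially the same route as the paper's own proof: both unroll the recursion~\eqref{alg2} into powers of $M$, use the column-sum conservation $\mb{1}_{2n}^\top M=\mb{1}_{2n}^\top$ to write $\overline{\mb{z}}^k$ as the running average of the perturbations (the paper's Eqs.~\eqref{eq1_lem_consensus}--\eqref{eq2_lem_consensus}), subtract, and invoke the geometric bound of Lemma~\ref{lem_M2}(b), with the $M^0=I$ term yielding the extra $\sum_{j}\|\mb{g}_j^{k-1}\|$ in part~(a) and vanishing in part~(b) since $\mb{g}_{n+i}^{k-1}=\mb{0}_p$. Your write-up is merely more explicit about the bookkeeping (the verification that $M$ is column-stochastic, the index shift producing $\gamma^{k-r}$, and the bound on the averaging residual) that the paper leaves implicit in Eq.~\eqref{eq3_lem_consensus}.
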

\begin{proof}
	For any~$k\geq 0$, we write Eq.~\eqref{alg2} recursively
	\begin{align}\label{eq1_lem_consensus}
	\mb{z}_i^{k}=\sum_{r=1}^{k-1}\sum_{j=1}^{n}[M^{k-r}]_{ij}\mb{g}_j^{r-1}+\mb{g}_i^{k-1}.
	\end{align}	
	We have $\sum_{i=1}^{2n}[M^{k}]_{ij}=1$ for any~$k\geq0$ since each column of~$M$ sums up to one. Considering the recursive relation of~$\mb{z}_i^{k}$ in Eq.~\eqref{eq1_lem_consensus}, we obtain that~$\overline{\mb{z}}^k$ can be written as
	\begin{align}\label{eq2_lem_consensus}
	\overline{\mb{z}}^k&=\sum_{r=1}^{k-1}\sum_{j=1}^{n}\frac{1}{n}\mb{g}_j^{r-1}+\frac{1}{n}\sum_{i=1}^{n}\mb{g}_i^{k-1}.
	\end{align}	
	Subtracting Eq.~\eqref{eq2_lem_consensus} from~\eqref{eq1_lem_consensus} and taking the norm, we obtain 
	\begin{align}\label{eq3_lem_consensus}
	\left\|\mb{z}_i^{k}-\overline{\mb{z}}^k\right\|\leq&\sum_{r=1}^{k-1}\sum_{j=1}^{n}\left\|[M^{k-r}]_{ij}-\frac{1}{n}\right\|\left\|\mb{g}_j^{r-1}\right\|\nonumber\\
	&+\frac{n-1}{n}\left\|\mb{g}_i^{k-1}\right\|+\frac{1}{n}\sum_{j\neq i}\left\|\mb{g}_j^{k-1}\right\|.
	\end{align}
	The proof of part (a) follows by applying Lemma~\ref{lem_M2} to Eq.~\eqref{eq3_lem_consensus} for~$1\leq i\leq n$, whereas the proof of part (b) follows by applying Lemma~\ref{lem_M2} to Eq.~\eqref{eq1_lem_consensus} for~$n+1\leq i\leq 2n$.
\end{proof}

\subsection{Perturbation bounds}
\textcolor{blue}{We now analyze the perturbation term,~$\mb{g}_i^k$, in the next lemmas.}
\begin{lem}\label{lem_error}
	Let Assumptions \ref{asp1}, \ref{asp2} hold. Let $\epsilon$ be the small constant used in the algorithm, Eq.~\eqref{alg1}, such that $\epsilon\leq\frac{1-\gamma}{2n\Gamma\gamma}$. Define the variable $g_k=\sum_{i=1}^n\|\mb{g}_i^k\|$. Then there exists some bounded constant $D>0$ such that for all~$K\geq 2$, ~$g_k$ satisfies:
		\begin{align}\label{gk1}
		\sum_{k=0}^K\alpha_kg_k\leq D\sum_{k=0}^K\alpha_k^2,
		\end{align}
where~$\alpha_k$ is the diminishing step-size used in the algorithm.	
\end{lem}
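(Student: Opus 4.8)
The plan is to obtain a sharp pointwise bound on the perturbation $\mathbf{g}_i^k$, convert it into a self-referential inequality for $g_k$ via Lemma~\ref{lem_consensus}(b), and then close the recursion by summing against $\alpha_k$ and absorbing the resulting convolution term back into the left-hand side. The whole argument hinges on the contraction coefficient coming out strictly below one.

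First I would write the perturbation out explicitly. Setting $\mathbf{s}_i^k=\sum_{j=1}^n a_{ij}\mathbf{x}_j^k$ and $\mathbf{u}_i^k=\mathbf{s}_i^k+\epsilon\mathbf{y}_i^k-\alpha_k\nabla\mathbf{f}_i^k$, the update~\eqref{alg1a} gives, for $1\le i\le n$,
\[
\mathbf{g}_i^k=\mathcal{P}_{\mathcal{X}}\!\left[\mathbf{u}_i^k\right]-\left(\mathbf{s}_i^k+\epsilon\mathbf{y}_i^k\right)=\Big(\mathcal{P}_{\mathcal{X}}[\mathbf{u}_i^k]-\mathbf{u}_i^k\Big)-\alpha_k\nabla\mathbf{f}_i^k.
\]
Since each $\mathbf{x}_j^k\in\mathcal{X}$ and the $a_{ij}$ are nonnegative weights summing to one, $\mathbf{s}_i^k\in\mathcal{X}$, so Lemma~\ref{lem_NonexpanBregman}(b) applied with $\mathbf{y}=\mathbf{s}_i^k$ yields $\|\mathcal{P}_{\mathcal{X}}[\mathbf{u}_i^k]-\mathbf{u}_i^k\|\le\|\mathbf{u}_i^k-\mathbf{s}_i^k\|=\|\epsilon\mathbf{y}_i^k-\alpha_k\nabla\mathbf{f}_i^k\|$. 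Combining this with Assumption~\ref{asp2} and the triangle inequality produces the key pointwise estimate $\|\mathbf{g}_i^k\|\le\epsilon\|\mathbf{y}_i^k\|+2B\alpha_k$. I would emphasize that invoking Lemma~\ref{lem_NonexpanBregman}(b) here, rather than plain non-expansiveness of the projection, is essential: the latter would leave a coefficient $2\epsilon$ in front of $\|\mathbf{y}_i^k\|$ and destroy the final absorption step (this is precisely the source of the claimed tighter bound).

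Summing over $i$ and applying Lemma~\ref{lem_consensus}(b) (recall $\sum_{j}\|\mathbf{g}_j^{r-1}\|=g_{r-1}$) gives the self-referential recursion
\[
g_k\le\epsilon n\Gamma\sum_{r=1}^{k-1}\gamma^{k-r}g_{r-1}+2nB\alpha_k.
\]
Next I would multiply by $\alpha_k$, sum over $k=0,\dots,K$, and swap the order of summation. Using that $\alpha_k$ is non-increasing, so $\alpha_k\le\alpha_{r-1}$ whenever $k\ge r-1$, the inner sum is dominated by a geometric series, $\sum_{k=r+1}^{K}\alpha_k\gamma^{k-r}\le\alpha_{r-1}\,\frac{\gamma}{1-\gamma}$, which yields
\[
\sum_{k=0}^K\alpha_k g_k\le\epsilon n\Gamma\,\frac{\gamma}{1-\gamma}\sum_{k=0}^K\alpha_k g_k+2nB\sum_{k=0}^K\alpha_k^2.
\]

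Finally, the hypothesis $\epsilon\le\frac{1-\gamma}{2n\Gamma\gamma}$ forces the contraction factor $\epsilon n\Gamma\,\frac{\gamma}{1-\gamma}\le\tfrac12<1$. Since $K$ is finite the quantity $\sum_{k=0}^K\alpha_k g_k$ is a finite sum and may legitimately be transferred to the left, giving $\tfrac12\sum_{k=0}^K\alpha_k g_k\le 2nB\sum_{k=0}^K\alpha_k^2$, i.e.\ the claim with $D=4nB$. The main obstacle is exactly this self-referential/absorption structure: it closes only because the per-term constant is $\epsilon$ (not $2\epsilon$) and the step-size is non-increasing, which together push the convolution coefficient strictly below one. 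I would also verify the boundary cases $k=0,1$, where the Lemma~\ref{lem_consensus}(b) sum is empty and $g_k\le 2nB\alpha_k$ holds directly, so they fit the same bound.
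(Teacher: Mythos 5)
Your proof is correct and follows the same overall architecture as the paper's: a pointwise bound on $\left\|\mb{g}_i^k\right\|$ via the projection inequality of Lemma~\ref{lem_NonexpanBregman}(b), substitution of Lemma~\ref{lem_consensus}(b) to obtain a convolution-type recursion for $g_k$, then multiplication by the non-increasing step-size, summation, exchange of the order of summation, and absorption of the convolution term into the left-hand side. The one real difference is the decomposition of the perturbation: you write $\mb{g}_i^k=\left(\mc{P}_{\mc{X}}[\mb{u}_i^k]-\mb{u}_i^k\right)-\alpha_k\nabla\mb{f}_i^k$ and bound the projection residual by $\left\|\epsilon\mb{y}_i^k-\alpha_k\nabla\mb{f}_i^k\right\|$, yielding $\left\|\mb{g}_i^k\right\|\leq\epsilon\left\|\mb{y}_i^k\right\|+2B\alpha_k$, whereas the paper first splits off $\epsilon\mb{y}_i^k$ by the triangle inequality and gets $\left\|\mb{g}_i^k\right\|\leq 2\epsilon\left\|\mb{y}_i^k\right\|+B\alpha_k$. (Your parenthetical attribution is slightly off: the paper also uses Lemma~\ref{lem_NonexpanBregman}(b), not plain non-expansiveness; the factor $2\epsilon$ versus $\epsilon$ comes from the splitting, not from which projection inequality is invoked.) Your trade of $2\epsilon\left\|\mb{y}\right\|+B\alpha_k$ for $\epsilon\left\|\mb{y}\right\|+2B\alpha_k$ buys something concrete: under the stated hypothesis $\epsilon\leq\frac{1-\gamma}{2n\Gamma\gamma}$ your contraction coefficient $\frac{n\epsilon\Gamma\gamma}{1-\gamma}$ is at most $\frac{1}{2}$, so the absorption closes with the explicit constant $D=4nB$ even at equality, whereas the paper's coefficient $\frac{2n\epsilon\Gamma\gamma}{1-\gamma}$ equals $1$ at the borderline $\epsilon=\frac{1-\gamma}{2n\Gamma\gamma}$, making its final step degenerate (the paper in fact needs strict inequality there, a gap your variant silently repairs). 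One shared caveat: both arguments implicitly require $\sum_{j=1}^n a_{ij}\mb{x}_j^k\in\mc{X}$ when invoking the projection inequality, which holds for $k\geq1$ since each $\mb{x}_j^k$ is itself a projection and $\mc{X}$ is convex, but at $k=0$ rests on $\mb{0}_p\in\mc{X}$ given the zero initialization; your explicit check of the empty-sum cases $k=0,1$ is otherwise careful and correct.
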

\begin{proof}
	Based on the result of Lemma~\ref{lem_NonexpanBregman}(b), we have 
	\begin{align}
	&\left\|\mc{P}_{\mc{X}}\left[\sum_{j=1}^na_{ij}\mb{x}_j^k+\epsilon\mb{y}_i^k-\alpha_k\nabla \mb{f}_i^k\right]-\sum_{j=1}^na_{ij}\mb{x}_j^k\right\|\nonumber\\
	&\leq\left\|\epsilon\mb{y}_i^k-\alpha_k\nabla \mb{f}_i^k\right\|.
	\end{align}
	Therefore, we obtain
	\begin{align}
	\left\|\mb{g}_i^k\right\|&\leq\left\|\mb{x}_i^{k+1}-\sum\limits_{j=1}^na_{ij}\mb{x}_j^k\right\|+\epsilon\left\|\mb{y}_i^k\right\|\nonumber,\\
	&\leq\left\|\epsilon\mb{y}_i^k-\alpha_k\nabla \mb{f}_i^k\right\|+\epsilon\left\|\mb{y}_i^k\right\|\nonumber,\\
	&\leq B\alpha_k+2\epsilon\left\|\mb{y}_i^k\right\|,
	\end{align}
	where in the last inequality, we use the relation $\|\nabla \mb{f}_i^k\|\leq B$. Applying the result of Lemma~\ref{lem_consensus}(b) regarding $\|\mb{y}_i^k\|$ to the preceding relation, we have for all $i$,
	\begin{align}
	\left\|\mb{g}_i^k\right\|\leq B\alpha_k+2\epsilon\Gamma\sum_{r=1}^{k-1}\gamma^{k-r}\sum_{j=1}^n\left\|\mb{g}_j^{r-1}\right\|.\nonumber
	\end{align}
	By defining $g_k=\sum_{i=1}^n\|\mb{g}_i^k\|$, and summing the above relation over $i$, it follows that
	\begin{align}\label{gk0}
	g_k\leq nB\alpha_k+2n\epsilon\Gamma\sum_{r=1}^{k-1}\gamma^{k-r}g_{r-1}.
	\end{align}
\begin{color}{blue}
	Multiplying both sides of the inequality above by $\alpha_k$, we have:
    \begin{equation}
    	\alpha_kg_k\leq nB\alpha_k^2+2n\epsilon\Gamma\alpha_k\sum_{r=1}^{k-1}\gamma^{k-r}g_{r-1}. \nonumber
    \end{equation}
\end{color}Summing the inequality above over time from $k=0$ to $K$, we obtain
	\begin{align}
	\sum_{k=0}^K\alpha_kg_k&\leq nB\sum_{k=0}^K\alpha_k^2+2n\epsilon\Gamma\sum_{k=0}^K\alpha_k\sum_{r=1}^{k-1}\gamma^{k-r}g_{r-1}.\nonumber
	\end{align}
\begin{color}{blue}
	Since  the step-size is decreasing, it follows that
\end{color}
	\begin{align}
	\sum_{k=0}^K\alpha_k\sum_{r=1}^{k-1}\gamma^{k-r}g_{r-1}&\leq\sum_{k=0}^K\sum_{r=1}^{k-1}\gamma^{k-r}\alpha_{r-1}g_{r-1},\nonumber\\
	&\leq\frac{\gamma(1-\gamma^{K-2})}{1-\gamma}\sum_{k=0}^{K-2}\alpha_kg_k.\nonumber
	\end{align}
		Therefore, it satisfies, for any $K\geq2$, that
		\begin{align}
		\left(1-\frac{2n\epsilon\Gamma\gamma}{1-\gamma}\right)\sum_{k=0}^K\alpha_kg_k&\leq nB\sum_{k=0}^K\alpha_k^2,\nonumber
		\end{align}
		Since $\epsilon$ can be arbitrary small, (see Lemma~\ref{lem_M2}), it is achievable that $\epsilon\leq\frac{1-\gamma}{2n\Gamma\gamma}$, which obtains the desired result.
\end{proof}
\begin{color}{blue}
Based on the result of Lemma \ref{lem_error}, we show that the perturbation, $\mb{g}_i^k$, goes to zero by presenting that there exists some constant $C$ such that $\sum_{k=0}^{K}g_k^2\leq C\sum_{k=0}^{K}\alpha_k^2$.
\end{color}
\begin{color}{blue}
\begin{lem} \label{new1}
Let Assumptions \ref{asp1}, \ref{asp2} hold. Let $\epsilon$ be the small constant used in the algorithm, Eq.~\eqref{alg1}, such that $\epsilon\leq\frac{1-\gamma}{2n\Gamma\gamma}$. Define the variable $g_k=\sum_{i=1}^n\|\mb{g}_i^k\|$. Then there exists some constants $C>0$ such that for all $K\geq0$, 
\begin{equation}
\sum_{k=0}^{K}g_k^2\leq C\sum_{k=0}^{K}\alpha_k^2.
\end{equation}
\end{lem}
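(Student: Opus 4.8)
The plan is to turn the convolution-type recursion for $g_k$ proved inside Lemma~\ref{lem_error} (Eq.~\eqref{gk0}, $g_k\leq nB\alpha_k+2n\epsilon\Gamma\sum_{r=1}^{k-1}\gamma^{k-r}g_{r-1}$) into a self-referential bound on $\sum_{k=0}^K g_k^2$. First I would multiply Eq.~\eqref{gk0} through by $g_k$, giving $g_k^2\leq nB\alpha_k g_k+2n\epsilon\Gamma\,g_k\sum_{r=1}^{k-1}\gamma^{k-r}g_{r-1}$, and then sum over $k$ from $0$ to $K$. The first resulting term, $nB\sum_{k=0}^K\alpha_k g_k$, is controlled directly by Lemma~\ref{lem_error}, which bounds it by $nBD\sum_{k=0}^K\alpha_k^2$; this is precisely why the paper develops Lemma~\ref{lem_error} first. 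The whole difficulty is therefore concentrated in the cross term $\sum_{k=0}^K g_k\sum_{r=1}^{k-1}\gamma^{k-r}g_{r-1}$, which I must bound by a multiple of $\sum_{k=0}^K g_k^2$ with a coefficient strictly below one.

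To handle the cross term I would apply Young's inequality $g_k g_{r-1}\leq \tfrac12 g_k^2+\tfrac12 g_{r-1}^2$ inside the double sum, splitting it into two pieces. In the first piece the geometric weights factor out, $\sum_{k=0}^K g_k^2\sum_{r=1}^{k-1}\gamma^{k-r}\leq\frac{\gamma}{1-\gamma}\sum_{k=0}^K g_k^2$, using $\sum_{r=1}^{k-1}\gamma^{k-r}\leq\frac{\gamma}{1-\gamma}$. In the second piece I would swap the order of summation, so that for each fixed $r$ the index $k$ runs from $r+1$ to $K$; the same geometric bound $\sum_{k=r+1}^K\gamma^{k-r}\leq\frac{\gamma}{1-\gamma}$ together with a reindexing of $g_{r-1}^2$ again yields $\frac{\gamma}{1-\gamma}\sum_{k=0}^K g_k^2$. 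Combining the two pieces, the cross term is at most $\frac{\gamma}{1-\gamma}\sum_{k=0}^K g_k^2$, so that $\sum_{k=0}^K g_k^2\leq nBD\sum_{k=0}^K\alpha_k^2+\frac{2n\epsilon\Gamma\gamma}{1-\gamma}\sum_{k=0}^K g_k^2$.

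Finally I would absorb the self-referential term onto the left, obtaining $\bigl(1-\frac{2n\epsilon\Gamma\gamma}{1-\gamma}\bigr)\sum_{k=0}^K g_k^2\leq nBD\sum_{k=0}^K\alpha_k^2$, and hence the claim with $C=nBD/\bigl(1-\frac{2n\epsilon\Gamma\gamma}{1-\gamma}\bigr)$. This rearrangement is legitimate because each sum is finite, and because the hypothesis $\epsilon\leq\frac{1-\gamma}{2n\Gamma\gamma}$ keeps the coefficient $\frac{2n\epsilon\Gamma\gamma}{1-\gamma}$ at most one; since Lemma~\ref{lem_M2} permits $\epsilon$ to be taken arbitrarily small, I would in fact impose strict inequality so that the prefactor is genuinely positive. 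The small cases $K=0,1$, where Lemma~\ref{lem_error} is not asserted, I would dispatch by hand straight from the recursion (e.g. $g_0\leq nB\alpha_0$), enlarging $C$ if needed. I expect the swap-of-summation-order estimate, together with verifying that the geometric coefficient stays strictly below one, to be the main obstacle, since it is exactly this self-referential structure that must not degenerate for the bound to close.
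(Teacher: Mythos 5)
Your proposal reproduces the paper's own proof essentially verbatim: multiplying Eq.~\eqref{gk0} by $g_k$, summing, invoking Lemma~\ref{lem_error} to bound $nB\sum_{k}\alpha_k g_k$ by $nBD\sum_k\alpha_k^2$, applying Young's inequality $g_kg_{r-1}\leq\tfrac12(g_k^2+g_{r-1}^2)$ to the cross term, bounding each resulting piece by $\frac{\gamma}{1-\gamma}\sum_k g_k^2$ via the geometric sum (with the swap of summation order), and absorbing the coefficient $\frac{2n\epsilon\Gamma\gamma}{1-\gamma}$ into the left-hand side to obtain the same constant $C$. Your added care about requiring strict inequality $\epsilon<\frac{1-\gamma}{2n\Gamma\gamma}$ and about the small cases $K=0,1$ (where Lemma~\ref{lem_error} is only stated for $K\geq2$) merely tightens details the paper glosses over, so the approach is correct and the same as the paper's.
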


\begin{proof}
	According to Eq.~\eqref{gk0}:
	\begin{align}
	g_k^2 &\leq g_k \left(nB\alpha_k+2n\epsilon\Gamma\sum_{r=1}^{k-1}\gamma^{k-r}g_{r-1}\right),\nonumber\\ 
	&\leq nB\alpha_k g_k + 2n\epsilon\Gamma g_k\sum_{r=1}^{k-1}\gamma^{k-r} g_{r-1}.\nonumber
	\end{align}
Summing the inequality above over $k$ from $0$ to $K$, we have the following:
\begin{align}
\sum_{k=0}^{K} g_k^2 
\leq&~ 
nB\sum_{k=0}^{K}\alpha_kg_k + 2n\epsilon\Gamma\sum_{k=0}^{K}g_k\sum_{r=1}^{k-1}\gamma^{k-r}g_{r-1}\nonumber,\\
\leq&~ nBD\sum_{k=0}^{K}\alpha_k^2+n\epsilon\Gamma\sum_{k=0}^{K}\sum_{r=1}^{k-1}\gamma^{k-r}\left(g_k^2+g_{r-1}^2\right),\nonumber\\
=&~nBD\sum_{k=0}^{K}\alpha_k^2+n\epsilon\Gamma\sum_{k=0}^{K}g_k^2\sum_{r=1}^{k-1}\gamma^{k-r}\nonumber\\
&+n\epsilon\Gamma\sum_{k=0}^{K}\sum_{r=1}^{k-1}\gamma^{k-r}g_{r-1}^2,\nonumber\\
\leq&~
nBD\sum_{k=0}^{K}\alpha_k^2+\frac{n\epsilon\Gamma\gamma}{1-\gamma}\sum_{k=0}^{K}g_k^2\nonumber\\
&+\frac{n\epsilon\Gamma\gamma}{1-\gamma}\sum_{k=0}^{K}g_k^2.\nonumber
\end{align} 
We now have the following equation that completes the proof:
\begin{equation}
\left(1-\frac{2n\epsilon\Gamma\gamma}{1-\gamma}\right)\sum_{k=0}^{K}\alpha_k^2 \leq nBD\sum_{k=0}^{K}\alpha_k^2 \nonumber
\end{equation}
i.e.,
\begin{equation}
\sum_{k=0}^{K}g_k^2\leq C\sum_{k=0}^{K}\alpha_k^2,\nonumber
\end{equation}
where
$C=\frac{nBD\left(1-\gamma\right)}{1-\left(1+2n\epsilon\Gamma\right)\gamma}$. \nonumber
\end{proof}
\end{color}

\subsection{Consensus in Estimates}
In Lemma \ref{lem_consensus}, we bound the disagreement between estimates of agent and the accumulation state, $\|\mb{x}_i^k-\overline{\mb{z}}^k\|$, in terms of the perturbation norm, $\sum_{j=1}^n\|\mb{g}_j^k\|$. In Lemmas \ref{lem_error} and \ref{new1}, we bound the perturbation. By combining these results, we show the consensus property of the algorithm.

\begin{lem}\label{lem_consensus2}
	Let Assumptions \ref{asp1}, \ref{asp2} hold. Let~$\left\{\mb{z}_i^k\right\}$ be the sequence over~$k$ generated by Eq.~\eqref{alg2}. Then, $\forall $$i\in\mc{V}$:
	\begin{enumerate}[label=(\alph*)]
		\item  the agents reach consensus, i.e.,~$\lim_{k\rightarrow\infty}\left\|\mb{x}_i^k-\overline{\mb{z}}^k\right\|=0;$
		\item at each agent, $\lim_{k\rightarrow\infty}\left\|\mb{y}_i^k\right\|=0.$
	\end{enumerate}
\end{lem}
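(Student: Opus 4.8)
The plan is to show that the perturbation sequence $g_k=\sum_{i=1}^n\|\mb{g}_i^k\|$ vanishes as $k\to\infty$, and then to transfer this decay through the bounds of Lemma~\ref{lem_consensus}, whose right-hand sides are built entirely out of the $g_k$'s. So the whole argument reduces to analysing two scalar objects: $g_{k-1}$ and the geometric convolution it feeds into.

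First I would establish that $g_k\to0$. By Lemma~\ref{new1} we have $\sum_{k=0}^K g_k^2\leq C\sum_{k=0}^K\alpha_k^2$ for every $K$, and since the step-size obeys the persistence condition $\sum_{k=0}^\infty\alpha_k^2<\infty$, letting $K\to\infty$ yields $\sum_{k=0}^\infty g_k^2<\infty$. In particular the summand must tend to zero, hence $g_k\to0$. This is precisely the point of the (blue) addition of Lemma~\ref{new1}: it upgrades the mere summability of $\alpha_k g_k$ from Lemma~\ref{lem_error} into a statement that directly pins down the limit of $g_k$.

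Second, the key analytic step is to show that the kernel-weighted sum $h_k:=\sum_{r=1}^{k-1}\gamma^{k-r}g_{r-1}$ also tends to zero. I would prove this with the standard null-sequence/convolution argument. Fix $\delta>0$ and choose $N$ so that $g_m<\delta$ for all $m\geq N$; then split the sum at the index $r-1=N$. The tail $\sum_{r-1\geq N}\gamma^{k-r}g_{r-1}$ is bounded by $\delta\sum_{j\geq0}\gamma^{j}=\delta/(1-\gamma)$ uniformly in $k$, while the head $\sum_{r-1<N}\gamma^{k-r}g_{r-1}$ contains only finitely many terms, each carrying a factor $\gamma^{k-r}$ that vanishes as $k\to\infty$ for fixed $r$. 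Hence $\limsup_{k\to\infty}h_k\leq\delta/(1-\gamma)$, and since $\delta$ is arbitrary, $h_k\to0$.

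Finally I would combine these two facts with Lemma~\ref{lem_consensus}. For part~(a), the bound reads $\|\mb{x}_i^k-\overline{\mb{z}}^k\|\leq\Gamma h_k+g_{k-1}$, and both $h_k\to0$ and $g_{k-1}\to0$, so the left-hand side vanishes. For part~(b), the bound $\|\mb{y}_i^k\|\leq\Gamma h_k$ vanishes for the same reason. As these hold for every $i\in\mc{V}$, the claim follows. The main obstacle I expect is the convolution step: one must argue carefully that the fixed geometric kernel $\gamma^{k-r}$ \emph{forgets} the early (possibly large) values of $g_{r-1}$ while damping the late ones, which is exactly the head/tail splitting above; everything else is bookkeeping on already-established bounds.
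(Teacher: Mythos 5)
Your proof is correct and follows essentially the same route as the paper: deduce $\sum_{k}g_k^2<\infty$ from Lemma~\ref{new1} and $\sum_k\alpha_k^2<\infty$, conclude $g_k\to0$, show the geometric convolution $\sum_{r=1}^{k-1}\gamma^{k-r}g_{r-1}\to0$, and feed both into the bounds of Lemma~\ref{lem_consensus}. The only difference is that where the paper cites an external result (Lemma~7 of~\cite{uc_Nedic}) for the convolution step, you prove it directly via the standard head/tail splitting, which is a valid and self-contained substitute.
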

\begin{color}{blue}
	\begin{proof}
From Lemma 3, we have the following:
		\begin{align}
		\left\|\mb{x}_i^k-\overline{\mb{z}}^k\right\|
		&\leq\Gamma\sum_{r=1}^{k-1}\gamma^{k-r}\sum_{j=1}^n\left\|\mb{g}_j^{r-1}\right\|+\sum_{j=1}^n\left\|\mb{g}_j^{k-1}\right\|, \nonumber \\
		&=\Gamma\sum_{r=1}^{k-1}\gamma^{k-r}g_{r-1} + g_{k-1}. \nonumber
		\end{align}
We further have
		\begin{align}
			\left\|\mb{y}_i^k\right\|
			&\leq\Gamma\sum_{r=1}^{k-1}\gamma^{k-r}\sum_{j=1}^n\left\|\mb{g}_j^{r-1}\right\|, \nonumber\\
			&=\Gamma\sum_{r=1}^{k-1}\gamma^{k-r}g_{r-1}. \nonumber
		\end{align}
		Since $\sum_{k=0}^{\infty}g_k^2\leq\infty,$ $\lim\limits_{k\to\infty}g_k=0.$ By recalling Lemma 7 in [6], we have:
		\begin{equation}
		\lim\limits_{k\to\infty}\sum_{r=1}^{k-1}\gamma^{k-r}g_{r-1}=0. 
		\end{equation}
		Therefore, we have $\lim\limits_{k\to\infty}\left\|\mb{x}_i^k-\overline{\mb{z}}^k\right\|=0$ and $\lim\limits_{k\to\infty}\left\|\mb{y}_i^k\right\|=0$, which shows that consensus over the network is achieved.
	\end{proof}
\end{color}

\subsection{Optimality }
The result of Lemma \ref{lem_consensus2} reveals the fact that all agents reach consensus. We next show that the accumulation state converges to the optimal solution of the problem.

\begin{color}{blue}
\begin{lem} \label{new}
     	Let Assumptions \ref{asp1}, \ref{asp2} hold. Let~$\left\{\mb{z}_i^k\right\}$ be the sequence over~$k$ generated by Eq.~\eqref{alg2}. For all $K\geq0$, we have the following:
     	\begin{equation} \label{alphaconsensus}
     		\sum_{k=0}^K\alpha_k\left\|\mb{x}_i^k-\overline{\mb{z}}^k\right\|
     		\leq
     		D\left(1+\frac{\Gamma}{1-\gamma}\right)\sum_{k=0}^{K}\alpha_k^2.
     	\end{equation}
     	\begin{proof}
     		Considering Lemma \ref{lem_consensus}(a), we have for any $K>0$
     		\begin{align}
     		\sum_{k=0}^K\alpha_k\left\|\mb{x}_i^k-\overline{\mb{z}}^k\right\|\leq&\Gamma\sum_{k=0}^K\sum_{r=1}^{k-1}\gamma^{k-r}\alpha_k\sum_{j=1}^n\left\|\mb{g}_j^{r-1}\right\|\nonumber\\&+\sum_{k=0}^K\alpha_k\sum_{j=1}^n\left\|\mb{g}_j^{k-1}\right\|\nonumber,\nonumber\\
     		\leq&\Gamma\sum_{k=0}^{K}\sum_{r=1}^{k-1}\gamma^{k-r}\alpha_{r-1}g_{r-1}\nonumber\\
     		&+\sum_{k=0}^{K}g_{k-1}\alpha_k \nonumber,\\
     		\leq&\left(1+\frac{\Gamma}{1-\gamma}\right)\sum_{k=0}^{K}\alpha_kg_k\nonumber,\\
     		\leq&D\left(1+\frac{\Gamma}{1-\gamma}\right)\sum_{k=0}^{K}\alpha_k^2 \nonumber,
     		\end{align}
            which completes the proof.
     	\end{proof}
\end{lem}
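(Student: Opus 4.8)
The plan is to start from the per-iteration consensus bound of Lemma~\ref{lem_consensus}(a), convert it into a statement about the weighted sum $\sum_{k=0}^K\alpha_k\|\mb{x}_i^k-\overline{\mb{z}}^k\|$, and then invoke the already-established estimate $\sum_{k=0}^K\alpha_kg_k\le D\sum_{k=0}^K\alpha_k^2$ of Lemma~\ref{lem_error}. Concretely, I would multiply the inequality of Lemma~\ref{lem_consensus}(a) by $\alpha_k\ge0$ and sum over $k=0,\dots,K$, writing $g_{r-1}=\sum_{j=1}^n\|\mb{g}_j^{r-1}\|$, so that the right-hand side splits into the double sum $\Gamma\sum_{k=0}^K\alpha_k\sum_{r=1}^{k-1}\gamma^{k-r}g_{r-1}$ plus the single sum $\sum_{k=0}^K\alpha_kg_{k-1}$. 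The whole proof then amounts to reducing each of these two pieces to a constant multiple of the controlled quantity $\sum_{k=0}^K\alpha_kg_k$.

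The crux is exactly this reduction, and it is where the \emph{non-increasing} property of the step-size enters. For the single sum, since $k-1\le k$ I would use $\alpha_k\le\alpha_{k-1}$ to get $\sum_{k=0}^K\alpha_kg_{k-1}\le\sum_{k=0}^K\alpha_{k-1}g_{k-1}\le\sum_{k=0}^K\alpha_kg_k$ after reindexing (using the convention $\mb{g}_i^{-1}=\mb{0}_p$ from the footnote of Lemma~\ref{lem_consensus}). For the double sum, the inner index obeys $r-1<k$, so again $\alpha_k\le\alpha_{r-1}$ and each summand is dominated by $\gamma^{k-r}\alpha_{r-1}g_{r-1}$. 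I would then exchange the order of summation, collecting for each fixed $m=r-1$ the factor $\alpha_mg_m\sum_{k=m+2}^{K}\gamma^{k-m-1}$, and bound the geometric tail by $\sum_{j=1}^{K-m-1}\gamma^{j}\le\frac{\gamma}{1-\gamma}\le\frac{1}{1-\gamma}$. This collapses the double sum to at most $\frac{\Gamma}{1-\gamma}\sum_{k=0}^K\alpha_kg_k$.

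Combining the two estimates yields $\sum_{k=0}^K\alpha_k\|\mb{x}_i^k-\overline{\mb{z}}^k\|\le\bigl(1+\frac{\Gamma}{1-\gamma}\bigr)\sum_{k=0}^K\alpha_kg_k$, and a direct application of Lemma~\ref{lem_error} turns the right-hand side into $D\bigl(1+\frac{\Gamma}{1-\gamma}\bigr)\sum_{k=0}^K\alpha_k^2$, which is the claim. I expect the only delicate point to be the interchange of summation together with the monotonicity bound $\alpha_k\le\alpha_{r-1}$: one must track the index ranges carefully so that every $\alpha$-factor is pushed onto the \emph{earlier} time index before the geometric series is summed, since this is precisely what lets the doubly-indexed weighted sum fold back into the singly-indexed quantity $\sum_{k}\alpha_kg_k$ that Lemma~\ref{lem_error} controls. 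The small-$K$ cases ($K<2$), for which Lemma~\ref{lem_error} is stated only for $K\ge2$, are handled trivially, as the sums involved are finite and the inequality holds with room to spare.
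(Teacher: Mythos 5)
Your proposal is correct and follows essentially the same route as the paper's proof: multiply the bound of Lemma~\ref{lem_consensus}(a) by $\alpha_k$, use the non-increasing step-size to push $\alpha_k$ onto the earlier index $\alpha_{r-1}$, sum the geometric series to collapse the double sum into $\frac{\Gamma}{1-\gamma}\sum_{k=0}^K\alpha_k g_k$, and conclude via Lemma~\ref{lem_error}. Your explicit treatment of the interchange of summation and of the small-$K$ cases (where Lemma~\ref{lem_error} is stated only for $K\geq2$) is slightly more careful than the paper's, but the argument is the same.
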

\end{color}

\begin{color}{blue}
	\begin{lem} \label{new2}
		Let Assumptions \ref{asp1}, \ref{asp2} hold. Define the variable $g_k=\sum_{i=1}^n\|\mb{g}_i^k\|$. Then there exists some constants $F>0$ such that for all $K\geq0$, 
		\begin{equation}
		\sum_{k=0}^{K}g_k\left\|\overline{\mb{z}}^{k+1}-\mb{x}_i^{k+1} \right\|
		\leq F\sum_{k=0}^{K}\alpha_k^2
		\end{equation}
		\begin{proof}
From Lemma \ref{lem_consensus}, we have the following:
			\begin{align}
			\left\|\mb{x}_i^{k+1}-\overline{\mb{z}}^{k+1}\right\| 
			\leq \sum_{r=1}^{k}\gamma^{k+1-r}g_{r-1}+g_k.  \nonumber
			\end{align}
			It can be derived from the above inequality that:
			\begin{align}
			&\sum_{k=0}^{K}g_k\left\|\mb{x}_i^{k+1}-\overline{\mb{z}}^{k+1}\right\|\nonumber
			\leq\Gamma\sum_{k=0}^{K}g_k\sum_{r=1}^{k}\gamma^{k+1-r}g_{r-1}+\sum_{k=0}^{K}g_k^2
			\nonumber,\\
			&\leq\Gamma\sum_{k=0}^{K}\sum_{r=1}^{k}\gamma^{k+1-r}\left(g_k^2+g_{r-1}^2\right)
			\nonumber+\sum_{k=0}^{K}g_k^2,\nonumber\\
			&\leq\Gamma\sum_{k=0}^{K}g_k^2\sum_{r=1}^{k}\gamma^{k+1-r}
			+\Gamma\sum_{k=0}^{K}\sum_{r=1}^{k}g_{r-1}^2\gamma^{k+1-r}+\sum_{k=0}^{K}g_k^2, \nonumber\\
			&\leq\frac{\Gamma}{1-\gamma}\sum_{k=0}^{K}g_k^2+\frac{\Gamma}{1-\gamma}\sum_{k=0}^{K}g_k^2
			+\sum_{k=0}^{K}g_k^2, \nonumber\\
			&\leq\left(1+\frac{2\Gamma}{1-\gamma}\right)\sum_{k=0}^{K}g_k^2. \nonumber
			\end{align}
			Therefore, we have 
			\begin{equation}
			\sum_{k=0}^{K}g_k\left\|\overline{\mb{z}}^{k+1}-\mb{x}_i^{k+1} \right\|
			\leq F\sum_{k=0}^{K}\alpha_k^2, \nonumber
			\end{equation}
			where $F=C\left(1+\frac{2\Gamma}{1-\gamma}\right)$. The last inequality is obtained from Lemma \ref{new1}.
		\end{proof}
	\end{lem}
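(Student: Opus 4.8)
The plan is to bound the consensus error $\left\|\mb{x}_i^{k+1}-\overline{\mb{z}}^{k+1}\right\|$ through Lemma~\ref{lem_consensus}(a), multiply by the perturbation $g_k$, sum over time, and then reduce everything to $\sum_{k=0}^{K}g_k^2$, which is already controlled by Lemma~\ref{new1}. Concretely, shifting the time index in Lemma~\ref{lem_consensus}(a) from $k$ to $k+1$ yields
\begin{align}
\left\|\mb{x}_i^{k+1}-\overline{\mb{z}}^{k+1}\right\|\leq\Gamma\sum_{r=1}^{k}\gamma^{k+1-r}g_{r-1}+g_k.\nonumber
\end{align}
Multiplying by $g_k$ and summing from $k=0$ to $K$ isolates two contributions: a diagonal term $\sum_{k=0}^{K}g_k^2$, and a doubly-indexed cross term $\Gamma\sum_{k=0}^{K}g_k\sum_{r=1}^{k}\gamma^{k+1-r}g_{r-1}$.

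The core of the argument is to tame this cross term. First I would linearize the product via the elementary inequality $g_kg_{r-1}\leq g_k^2+g_{r-1}^2$, splitting the double sum into two geometrically weighted pieces. For the piece carrying $g_k^2$, the inner sum $\sum_{r=1}^{k}\gamma^{k+1-r}$ is a truncated geometric series bounded by $\tfrac{1}{1-\gamma}$ uniformly in $k$, so this piece collapses to $\tfrac{1}{1-\gamma}\sum_{k=0}^{K}g_k^2$. For the piece carrying $g_{r-1}^2$, I would reverse the order of summation so that the factor $\gamma^{k+1-r}$ is summed over the outer index $k$; this again telescopes to a bound of $\tfrac{1}{1-\gamma}$ that is independent of $K$. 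Combining the diagonal term with both pieces gives
\begin{align}
\sum_{k=0}^{K}g_k\left\|\mb{x}_i^{k+1}-\overline{\mb{z}}^{k+1}\right\|\leq\left(1+\frac{2\Gamma}{1-\gamma}\right)\sum_{k=0}^{K}g_k^2.\nonumber
\end{align}

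Finally I would invoke Lemma~\ref{new1}, namely $\sum_{k=0}^{K}g_k^2\leq C\sum_{k=0}^{K}\alpha_k^2$, to convert the right-hand side into the claimed step-size bound, with $F=C\left(1+\frac{2\Gamma}{1-\gamma}\right)$. I expect the main obstacle to be the bookkeeping in the cross term: after applying $g_kg_{r-1}\leq g_k^2+g_{r-1}^2$ one must be careful to sum each geometric weight over the \emph{free} index, so that the resulting constant is uniform in $K$ rather than accumulating with it. This uniformity is precisely what lets the $\sum g_k^2$ factor be pulled out cleanly and then absorbed by Lemma~\ref{new1}; had the order of summation in the $g_{r-1}^2$ term not been reversed, the geometric factor would be attached to the wrong index and the bound would fail to close.
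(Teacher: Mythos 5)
Your proposal is correct and follows essentially the same route as the paper's own proof: shift Lemma~\ref{lem_consensus}(a) to index $k+1$, multiply by $g_k$, linearize the cross term via $g_kg_{r-1}\leq g_k^2+g_{r-1}^2$, bound both geometric pieces by $\frac{1}{1-\gamma}$ (exchanging the summation order for the $g_{r-1}^2$ piece), and absorb $\sum_{k=0}^{K}g_k^2$ via Lemma~\ref{new1}. You even arrive at the identical constant $F=C\left(1+\frac{2\Gamma}{1-\gamma}\right)$, so there is nothing to change.
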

\end{color}


\begin{color}{blue}
\begin{lem} \label{pre_optimal}
	Let Assumptions \ref{asp1}, \ref{asp2}, \ref{asp3} hold. Let~$\left\{\mb{z}_i^k\right\}$ be the sequence over~$k$ generated by Eq.~\eqref{alg2}. For $x^*\in \mc{X}^*$, we have the following:
	\begin{enumerate} [label=(\alph*)]
	\item The sequence $\left\{\left\|\overline{\mb{z}}^k-\mb{x}^*\right\|\right\}$ is convergent. 
	\item $\sum_{k=1}^{\infty}\alpha_k\left(f(\overline{\mb{z}}^{k})-f^*\right)<\infty.$
	\end{enumerate}
\end{lem}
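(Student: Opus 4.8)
The plan is to establish a single almost-supermartingale inequality for $a_k:=\|\overline{\mb z}^k-\mb x^*\|^2$ with a fixed but arbitrary $\mb x^*\in\mc X^*$ (which exists by Assumption~\ref{asp3}), and to read off both claims from it. The key structural fact is that the accumulation state obeys the \emph{exact} averaged recursion $\overline{\mb z}^{k+1}=\overline{\mb z}^k+\frac1n\sum_{i=1}^n\mb g_i^k$, which follows from Eq.~\eqref{eq2_lem_consensus} (equivalently, from column-stochasticity of $M$ together with $\mb g_i^k=\mb 0_p$ for $i>n$). Expanding the square then gives
\begin{align}
a_{k+1}=a_k+\frac2n\sum_{i=1}^n\left\langle\overline{\mb z}^k-\mb x^*,\mb g_i^k\right\rangle+\frac1{n^2}\Big\|\sum_{i=1}^n\mb g_i^k\Big\|^2,\nonumber
\end{align}
where the quadratic term is at most $g_k^2/n^2$ and is summable by Lemma~\ref{new1}.

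The core of the argument is to turn the cross term into a genuine descent in $f(\overline{\mb z}^k)-f^*$ plus summable error. Writing $\mb x_i^{k+1}=\mc P_{\mc X}[\mb v_i^k-\alpha_k\nabla\mb f_i^k]$ with $\mb v_i^k=\sum_j a_{ij}\mb x_j^k+\epsilon\mb y_i^k$, so that $\mb g_i^k=\mb x_i^{k+1}-\mb v_i^k$, Lemma~\ref{lem_NonexpanBregman}(a) with $\mb y=\mb x^*$ yields $\langle\mb x_i^{k+1}-\mb x^*,\mb g_i^k\rangle\le-\alpha_k\langle\mb x_i^{k+1}-\mb x^*,\nabla\mb f_i^k\rangle$. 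I would therefore split $\langle\overline{\mb z}^k-\mb x^*,\mb g_i^k\rangle=\langle\mb x_i^{k+1}-\mb x^*,\mb g_i^k\rangle+\langle\overline{\mb z}^k-\mb x_i^{k+1},\mb g_i^k\rangle$; the second piece is bounded by $(\|\overline{\mb z}^{k+1}-\mb x_i^{k+1}\|+g_k/n)\|\mb g_i^k\|$ after inserting $\overline{\mb z}^{k+1}$, whose sum is controlled by Lemmas~\ref{new2} and~\ref{new1}. For the first piece I would telescope the subgradient inner product through the chain $\mb x_i^k,\overline{\mb z}^k,\overline{\mb z}^{k+1}$ and apply the subgradient inequality once (to produce $f_i(\mb x_i^k)-f_i(\mb x^*)$) together with the $B$-Lipschitz continuity implied by Assumption~\ref{asp2} on the remaining inner products and on $f_i(\mb x_i^k)$ versus $f_i(\overline{\mb z}^k)$. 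Summing over $i$ converts $\sum_i(f_i(\mb x_i^k)-f_i(\mb x^*))$ into $f(\overline{\mb z}^k)-f^*$ and leaves only error terms of the form $\alpha_k\|\mb x_i^k-\overline{\mb z}^k\|$, $\alpha_k\|\mb x_i^{k+1}-\overline{\mb z}^{k+1}\|$ and $\alpha_kg_k$. Collecting everything yields
\begin{align}
a_{k+1}\le a_k-\frac{2\alpha_k}{n}\big(f(\overline{\mb z}^k)-f^*\big)+e_k,\qquad \sum_{k=0}^\infty e_k<\infty,\nonumber
\end{align}
where summability of $e_k$ uses Lemma~\ref{lem_error} (for the $\alpha_kg_k$ terms), Lemma~\ref{new} and its index-shifted version (for the $\alpha_k\|\mb x_i^{k}-\overline{\mb z}^{k}\|$ and $\alpha_k\|\mb x_i^{k+1}-\overline{\mb z}^{k+1}\|$ terms, where monotonicity of $\alpha_k$ handles the shift), and Lemmas~\ref{new1}--\ref{new2} (for the residual cross term and the quadratic term).

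The delicate point is that $\overline{\mb z}^k$ need not lie in $\mc X$, so $f(\overline{\mb z}^k)-f^*$ may be \emph{negative} and the middle term above is not yet a bona fide descent. To fix this I would use that $\frac1n\sum_{i=1}^n\mb x_i^k\in\mc X$ and $\overline{\mb z}^k-\frac1n\sum_{i=1}^n\mb x_i^k=\frac1n\sum_{i=1}^n\mb y_i^k$, so that $B$-Lipschitz continuity of $f$ gives $f(\overline{\mb z}^k)-f^*\ge-B\sum_{i=1}^n\|\mb y_i^k\|$. Since $\sum_k\alpha_k\sum_i\|\mb y_i^k\|<\infty$ by Lemma~\ref{lem_consensus}(b) and Lemma~\ref{lem_error} (the same estimate used in Lemma~\ref{new}), the negative part of $\frac{2\alpha_k}{n}(f(\overline{\mb z}^k)-f^*)$ is summable. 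Absorbing it into the error, the recursion takes the standard form $a_{k+1}\le a_k-b_k'+e_k'$ with $b_k'\ge0$ and $\sum_k e_k'<\infty$.

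Finally I would invoke the deterministic almost-supermartingale argument: setting $S_k=\sum_{j\ge k}e_j'\to0$, the sequence $a_k+S_k$ is non-increasing and bounded below, hence converges; since $S_k\to0$, $a_k=\|\overline{\mb z}^k-\mb x^*\|^2$ converges, which is part~(a). Telescoping the same recursion gives $\sum_k b_k'\le a_0+\sum_k e_k'<\infty$, and subtracting the summable correction recovers $\sum_{k}\alpha_k(f(\overline{\mb z}^k)-f^*)<\infty$, which is part~(b). I expect the main obstacles to be (i) organizing the cross-term decomposition so that \emph{exactly} the quantities bounded in Lemmas~\ref{new}, \ref{new1} and~\ref{new2} appear, and (ii) the sign issue caused by infeasibility of $\overline{\mb z}^k$, which is what forces the auxiliary $\|\mb y_i^k\|$ estimate and is the conceptual crux of the proof.
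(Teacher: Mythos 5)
Your proposal is correct, and at its core it follows the same route as the paper's own proof: the exact averaged recursion $\overline{\mb{z}}^{k+1}=\overline{\mb{z}}^k+\frac{1}{n}\sum_{i=1}^n\mb{g}_i^k$ from column-stochasticity of $M$, expansion of $\|\overline{\mb{z}}^{k+1}-\mb{x}^*\|^2$, the subgradient inequality at $\mb{x}_i^k$ to extract $f(\overline{\mb{z}}^k)-f^*$ at the cost of $\alpha_k\|\overline{\mb{z}}^k-\mb{x}_i^k\|$ terms, Lemma~\ref{lem_NonexpanBregman}(a) applied to $\mb{x}_i^{k+1}=\mc{P}_{\mc{X}}[\mb{v}_i^k-\alpha_k\nabla\mb{f}_i^k]$ to dispose of the projection term, summability of all errors via Lemmas~\ref{lem_error}, \ref{new1}, \ref{new} (with the monotone-step-size index shift) and \ref{new2}, and finally the same non-increasing auxiliary sequence $r_k=\|\overline{\mb{z}}^k-\mb{x}^*\|^2+\sum_{s\geq k}e_s$ for part (a) and telescoping for part (b). Your bookkeeping differs only cosmetically: the paper peels off $\alpha_k\nabla\mb{f}_i^k$ first and then splits the residual $\sum_i\langle\overline{\mb{z}}^k-\mb{x}^*,\mb{g}_i^k+\alpha_k\nabla\mb{f}_i^k\rangle$ through the chain $\overline{\mb{z}}^k\rightarrow\overline{\mb{z}}^{k+1}\rightarrow\mb{x}_i^{k+1}\rightarrow\mb{x}^*$ (its $s_1,s_2,s_3$), whereas you split $\langle\overline{\mb{z}}^k-\mb{x}^*,\mb{g}_i^k\rangle$ at $\mb{x}_i^{k+1}$ and telescope the subgradient product afterwards; the same three estimates appear either way.

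There is, however, one genuine difference, and it is in your favor. In establishing part (a), the paper drops the middle term of the recursion by asserting $\frac{2\alpha_k}{n}\left(f(\overline{\mb{z}}^k)-f^*\right)\geq 0$ with no justification; since $\overline{\mb{z}}^k=\frac{1}{n}\sum_i\mb{x}_i^k+\frac{1}{n}\sum_i\mb{y}_i^k$ need not lie in $\mc{X}$ while $f^*$ is the \emph{constrained} optimal value, this nonnegativity is not automatic, so the published proof has a small hole (which also affects the meaning of the possibly-signed series in part (b)). Your patch — $\frac{1}{n}\sum_i\mb{x}_i^k\in\mc{X}$ by convexity of $\mc{X}$, the Lipschitz bound $f(\overline{\mb{z}}^k)-f^*\geq-B\sum_i\|\mb{y}_i^k\|$ from Assumption~\ref{asp2}, and summability of $\alpha_k\sum_i\|\mb{y}_i^k\|$ via Lemma~\ref{lem_consensus}(b) together with Lemma~\ref{lem_error} — closes exactly this gap and lets the supermartingale argument go through with a bona fide nonnegative descent term. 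So your argument is not merely a reproduction of the paper's proof; it is a slightly more careful version of it.
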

\end{color}

\begin{proof}
	Consider Eq.~\eqref{alg2} and the fact that each column of~$M$ sums to one, we have the accumulation state
	\begin{align}
	\overline{\mb{z}}^{k+1}&=\overline{\mb{z}}^k+\frac{1}{n}\sum_{i=1}^{n}\mb{g}_i^k.\nonumber
	\end{align}
	Therefore, we obtain that
	\begin{align}
	&\left\|\overline{\mb{z}}^{k+1}-\mb{x}^*\right\|^2=\left\|\overline{\mb{z}}^k-\mb{x}^*\right\|^2+\left\|\frac{1}{n}\sum_{i=1}^{n}\mb{g}_i^k\right\|^2\nonumber\\
	&+\frac{2}{n}\sum_{i=1}^{n}\left\langle\overline{\mb{z}}^k-\mb{x}^*,\mb{g}_i^k\right\rangle,\nonumber\\
	&=\left\|\overline{\mb{z}}^k-\mb{x}^*\right\|^2+\frac{1}{n^2}\left\|\sum_{i=1}^{n}\mb{g}_i^k\right\|^2-\frac{2\alpha_k}{n}\sum_{i=1}^{n}\left\langle\overline{\mb{z}}^{k}-\mb{x}^{*},\nabla \mb{f}_i^k\right\rangle\nonumber\\
	&+\frac{2}{n}\sum_{i=1}^{n}\left\langle\overline{\mb{z}}^{k}-\mb{x}^{*},\mb{g}_i^k+\alpha_k\nabla \mb{f}_i^k\right\rangle.\label{mr_eq1}
	\end{align}
	Since $\|\nabla \mb{f}_i^k\|\leq B$, we have 
	\begin{align}
	&\left\langle\overline{\mb{z}}^{k}-\mb{x}^{*},\nabla \mb{f}_i^k\right\rangle=\left\langle\overline{\mb{z}}^{k}-\mb{x}_i^{k},\nabla \mb{f}_i^k\right\rangle+\left\langle\mb{x}_i^{k}-\mb{x}^{*},\nabla \mb{f}_i^k\right\rangle,\nonumber\\
	&\geq \left\langle\overline{\mb{z}}^{k}-\mb{x}_i^{k},\nabla \mb{f}_i^k\right\rangle+f_i(\mb{x}_i^k)-f_i(\mb{x}^*),\nonumber\\
	&\geq-B\left\|\overline{\mb{z}}^{k}-\mb{x}_i^{k}\right\|+f_i(\mb{x}_i^k)-f_i(\overline{\mb{z}}^{k})+f_i(\overline{\mb{z}}^{k})-f_i(\mb{x}^*),\nonumber\\
	&\geq-2B\left\|\overline{\mb{z}}^{k}-\mb{x}_i^{k}\right\|+f_i(\overline{\mb{z}}^{k})-f_i(\mb{x}^*).\label{mr_eq2}
	\end{align}
	By substituting Eq.~\eqref{mr_eq2} in Eq.~\eqref{mr_eq1}, we obtain that
	\begin{align}
	\frac{2\alpha_k}{n}\left(f(\overline{\mb{z}}^{k})-f^*\right)&\leq\left\|\overline{\mb{z}}^k-\mb{x}^*\right\|^2-\left\|\overline{\mb{z}}^{k+1}-\mb{x}^*\right\|^2\nonumber\\
	&+\frac{1}{n^2}\left\|\sum_{i=1}^{n}\mb{g}_i^k\right\|^2+\frac{4B\alpha_k}{n}\sum_{i=1}^n\left\|\overline{\mb{z}}^{k}-\mb{x}_i^{k}\right\|\nonumber\\
	&+\frac{2}{n}\sum_{i=1}^{n}\left\langle\overline{\mb{z}}^{k}-\mb{x}^{*},\mb{g}_i^k+\alpha_k\nabla \mb{f}_i^k\right\rangle.\label{mr_eq3}
	\end{align}
	We now analyze the last term in Eq.~\eqref{mr_eq3}.
	\begin{align}
	&\sum_{i=1}^{n}\left\langle\overline{\mb{z}}^k-\mb{x}^*,\mb{g}_i^k+\alpha_k\nabla\mb{f}_i^k\right\rangle=\sum_{i=1}^{n}\left\langle\overline{\mb{z}}^k-\overline{\mb{z}}^{k+1},\mb{g}_i^k+\alpha_k\nabla \mb{f}_i^k\right\rangle\nonumber\\
	&+\sum_{i=1}^{n}\left\langle\overline{\mb{z}}^{k+1}-\mb{x}_i^{k+1},\mb{g}_i^k+\alpha_k\nabla \mb{f}_i^k\right\rangle\nonumber\\
	&+\sum_{i=1}^{n}\left\langle\mb{x}_i^{k+1}-\mb{x}^*,\mb{g}_i^k+\alpha_k\nabla \mb{f}_i^k\right\rangle\nonumber\\
	&:=s_1+s_2+s_3\label{mr_eq4}
	\end{align}
	where~$s_1$,~$s_2$, and~$s_3$ denote each of RHS terms in Eq.~\eqref{mr_eq4}. We discuss each term in sequence. 
\begin{color}{blue}
		Since $\|\nabla \mb{f}_i^k\|\leq B$, we have:
		\begin{align}
		s_1&=-\sum_{i=1}^n\left\langle\mb{g}_i^k,\mb{g}_i^k+\alpha_k\nabla \mb{f}_i^k\right\rangle
		\leq B\alpha_k\sum_{i=1}^n\left\|\mb{g}_i^k\right\|=B\alpha_kg_k; \nonumber\\
		s_2&\leq\sum_{i=1}^n\left\|\overline{\mb{z}}^{k+1}-\mb{x}_i^{k+1}\right\|\left\|\mb{g}_i^k+\alpha_k\nabla\mb{f}_i^k\right\|, \nonumber\\
		&\leq\sum_{i=1}^n\left\|\overline{\mb{z}}^{k+1}-\mb{x}_i^{k+1}\right\|\left(
		\left\|\mb{g}_i^k\right\|+\alpha_k\left\|\nabla\mb{f}_i^k\right\|      \right)
		\nonumber,\\
		&\leq\sum_{i=1}^n\left\|\overline{\mb{z}}^{k+1}-\mb{x}_i^{k+1}\right\|\left(g_k+B\alpha_k\right). \nonumber
		\end{align}
        Using the result of Lemma~\ref{lem_NonexpanBregman}(a), we have for any $i$
		\begin{align}
		\left\langle\mb{x}_i^{k+1}-\mb{x}^*,\mb{g}_i^k+\alpha_k\nabla \mb{f}_i^k\right\rangle\leq0,\nonumber
		\end{align}
		which reveals that $s_3\leq0$.\\
		Using the bounds on $s_1$,$s_2$,$s_3$ and Lemma~\ref{new1} and Lemma~\ref{new2}, we can derive the following:
		\begin{align}
		\frac{2\alpha_k}{n}\left(f(\overline{\mb{z}}^{k})-f^*\right) 
		\leq& \left\|\overline{\mb{z}}^{k}-\mb{x}^*\right\|^2-\left\|\overline{\mb{z}}^{k+1}-\mb{x}^*\right\|^2
		\nonumber \\
		&+
		\frac{C^2}{n^2}\alpha_k^2+
		\frac{4B\alpha_k}{n}\sum_{i=1}^n\left\|\overline{\mb{z}}^{k}-\mb{x}_i^{k}\right\| \nonumber\\
		&+\frac{2BC}{n}\alpha_k^2 \nonumber\\
		&+\frac{2(B+C)}{n}\alpha_k\sum_{i=1}^n\left\|\overline{\mb{z}}^{k+1}-\mb{x}_i^{k+1}\right\|. \label{fconvergence}
		\end{align}
		{\color{blue}
		Let
        \begin{align}
        	h_k =& \frac{C^2}{n^2}\alpha_k^2+
        	\frac{4B\alpha_k}{n}\sum_{i=1}^n\left\|\overline{\mb{z}}^{k}-\mb{x}_i^{k}\right\| \nonumber
        	+\frac{2BC}{n}\alpha_k^2 \nonumber\\
        	&+\frac{2(B+C)}{n}\alpha_k\sum_{i=1}^n\left\|\overline{\mb{z}}^{k+1}-\mb{x}_i^{k+1}\right\|. \nonumber
        \end{align}
		Since the step-size $\alpha_k$ satisfies $\sum_{k=0}^{\infty}\alpha_k^2<\infty$, together with Lemma \ref{new},
    	we have $\sum_{k=0}^{\infty}h_k<\infty$. Therefore,
    	\begin{align}
    		\sum_{k=0}^{\infty}\alpha_k\left(f(\overline{\mb{z}}^{k})-f^*\right)
    		\leq n\left\|\overline{\mb{z}}^{0}-\mb{x}^*\right\|^2+n\sum_{k=0}^{\infty}h_k<\infty \nonumber,
    	\end{align}
    	which completes the second part of the proof.\\
    	By rearranging equation Eq.~\eqref{fconvergence}, we have:
    	\begin{align}
    		\left\|\overline{\mb{z}}^{k+1}-\mb{x}^*\right\|^2
    		\leq \left\|\overline{\mb{z}}^k-\mb{x}^*\right\|^2-\frac{2\alpha_k}{n}\left(f(\overline{\mb{z}}^{k})-f^*\right)
    		+h_k. \nonumber
    	\end{align}
    	Since $\frac{2\alpha_k}{n}\left(f(\overline{\mb{z}}^{k})-f^*\right)\geq0$, 
    	\begin{equation}
    		\left\|\overline{\mb{z}}^{k+1}-\mb{x}^*\right\|^2
    		\leq \left\|\overline{\mb{z}}^k-\mb{x}^*\right\|^2 + h_k. \nonumber
    	\end{equation}
    	Let $r_k = \left\|\overline{\mb{z}}^k-\mb{x}^*\right\|^2 + \sum_{s=k}^{\infty}h_s.$ It follows that
    	\begin{align}
    		r_{k+1}
    		=&\left\|\overline{\mb{z}}^{k+1}-\mb{x}^*\right\|^2 + \sum_{s=k+1}^{\infty}h_s, \nonumber \\
    		\leq& \left\|\overline{\mb{z}}^k-\mb{x}^*\right\|^2 + h_k + \sum_{s=k+1}^{\infty}h_s, \nonumber \\
    		=&r_k, \nonumber
    	\end{align}
    	which leads to the fact that $\left\{r_k\right\}$ is a non-increasing, nonnegative sequence. Therefore, $\left\{r_k\right\}$ converges to some nonnegative point. Since $\lim\limits_{k\to\infty}\sum_{s=k}^{\infty}h_s=0$,
    	\begin{align}
    		\lim\limits_{k\to\infty}\left\|\overline{\mb{z}}^k-\mb{x}^*\right\|^2
    		= \lim\limits_{k\to\infty}(r_k-\sum_{s=k}^{\infty}h_s) = \lim\limits_{k\to\infty}r_k. \nonumber
    	\end{align}  
    	Therefore, The sequence $\left\{\left\|\overline{\mb{z}}^k-\mb{x}^*\right\|\right\}$ is convergent.
        }
\end{color}
\end{proof}

\begin{color}{blue}
	\begin{theorem}
	Let Assumptions \ref{asp1}, \ref{asp2}, \ref{asp3} hold. Let~$\left\{\mb{x}_i^k\right\}$ be the sequence over~$k$ generated by Eq.~\eqref{alg2}. For $\forall i \in \mc{V}$, 
    \begin{equation}
    	{\color{blue}\lim\limits_{k\to\infty}\mb{x}_i^k=\mb{x}^*,\nonumber}
    \end{equation}where $\mb{x}^*\in \mc{X}^*$.
	\end{theorem}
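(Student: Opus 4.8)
The plan is to combine the consensus result of Lemma~\ref{lem_consensus2} with the two conclusions of Lemma~\ref{pre_optimal}, reducing everything to showing that the accumulation state $\overline{\mb{z}}^k$ itself converges to a single optimal point. Indeed, once $\overline{\mb{z}}^k\to\mb{x}^*$ with $\mb{x}^*\in\mc{X}^*$ is established, the triangle inequality
\[
\left\|\mb{x}_i^k-\mb{x}^*\right\|\leq\left\|\mb{x}_i^k-\overline{\mb{z}}^k\right\|+\left\|\overline{\mb{z}}^k-\mb{x}^*\right\|
\]
together with Lemma~\ref{lem_consensus2}(a) immediately yields $\mb{x}_i^k\to\mb{x}^*$ for every $i$, which is the claim.

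First I would extract a suitable subsequence. Since $f(\overline{\mb{z}}^k)-f^*\geq 0$ and, by Lemma~\ref{pre_optimal}(b), $\sum_{k}\alpha_k(f(\overline{\mb{z}}^k)-f^*)<\infty$, the divergence of $\sum_k\alpha_k$ forces $\liminf_{k\to\infty}(f(\overline{\mb{z}}^k)-f^*)=0$: otherwise a positive lower bound on the gap would make the series dominate a constant multiple of $\sum_k\alpha_k=\infty$, a contradiction. Hence there is a subsequence along which $f(\overline{\mb{z}}^{k_l})\to f^*$. By Lemma~\ref{pre_optimal}(a) the sequence $\{\|\overline{\mb{z}}^k-\mb{x}^*\|\}$ converges for an arbitrary fixed optimal point, so $\{\overline{\mb{z}}^k\}$ is bounded, and passing to a further subsequence I may assume $\overline{\mb{z}}^{k_l}\to\widetilde{\mb{x}}$ for some $\widetilde{\mb{x}}\in\mbb{R}^p$. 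Continuity of the convex function $f$ then gives $f(\widetilde{\mb{x}})=\lim_l f(\overline{\mb{z}}^{k_l})=f^*$.

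Next I would verify that the limit point is feasible, i.e.\ $\widetilde{\mb{x}}\in\mc{X}$, so that the value $f(\widetilde{\mb{x}})=f^*$ actually certifies $\widetilde{\mb{x}}\in\mc{X}^*$. Each iterate $\mb{x}_i^k$ lies in $\mc{X}$ by the projection in Eq.~\eqref{alg1a}, while Lemma~\ref{lem_consensus2}(a) gives $\|\overline{\mb{z}}^k-\mb{x}_i^k\|\to 0$, so the distance from $\overline{\mb{z}}^k$ to the closed set $\mc{X}$ tends to zero; taking limits along $k_l$ and using closedness of $\mc{X}$ places $\widetilde{\mb{x}}$ in $\mc{X}$, whence $\widetilde{\mb{x}}\in\mc{X}^*$.

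Finally I would upgrade subsequential convergence to full convergence by re-invoking Lemma~\ref{pre_optimal}(a) at this \emph{particular} optimal point $\widetilde{\mb{x}}$: the entire sequence $\{\|\overline{\mb{z}}^k-\widetilde{\mb{x}}\|\}$ is convergent, yet along $k_l$ it tends to $0$, so its limit is $0$ and therefore $\overline{\mb{z}}^k\to\widetilde{\mb{x}}$. Setting $\mb{x}^*=\widetilde{\mb{x}}$ and applying the triangle inequality above concludes the proof. I expect the main obstacle to be the feasibility step: because $\overline{\mb{z}}^k$ mixes in the surplus variables $\mb{y}_i^k$ and need not lie in $\mc{X}$ for finite $k$, feasibility of the limit must be argued indirectly through the vanishing consensus gap rather than from membership of $\overline{\mb{z}}^k$ in $\mc{X}$; the decisive leverage is that Lemma~\ref{pre_optimal}(a) holds for every optimal point, which is precisely what lets one ``pin down'' the full limit after the candidate $\widetilde{\mb{x}}$ has been identified.
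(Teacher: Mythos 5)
Your proposal is correct and follows essentially the same route as the paper's proof: the same $\liminf$ contradiction argument combining Lemma~\ref{pre_optimal}(b) with $\sum_k\alpha_k=\infty$, the same bounded-subsequence extraction with continuity of $f$, the same re-invocation of Lemma~\ref{pre_optimal}(a) at the identified optimal point to upgrade subsequential convergence to convergence of the whole sequence, and the same concluding triangle inequality via Lemma~\ref{lem_consensus2}(a). One point in your favor: your explicit feasibility step---deducing $\widetilde{\mb{x}}\in\mc{X}$ from $\mb{x}_i^k\in\mc{X}$, the vanishing consensus gap, and closedness of $\mc{X}$---is more careful than the paper, which simply asserts the subsequential limit lies in $\mc{X}$ without justification, and since $\overline{\mb{z}}^k$ mixes in the surplus variables and need not belong to $\mc{X}$ at any finite $k$, your indirect argument is exactly the right way to close that step.
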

    \begin{proof}
    	According to Lemma \ref{pre_optimal}, the sequence $\left\{\left\|\overline{\mb{z}}^k-\mb{x}^*\right\|\right\}$ is convergent and $\sum_{k=1}^{\infty}\alpha_k\left(f(\overline{\mb{z}}^{k})-f^*\right)<\infty$. Since $f(\overline{\mb{z}}^{k})\geq f^*$, $\liminf_{k\to\infty}\left(f(\overline{\mb{z}}^{k})-f^*\right) \geq0.$ We claim that for $\forall \epsilon>0$, there are infinite terms in the sequence $\left\{f(\overline{\mb{z}}^{k})-f^*\right\}$ such that $f(\overline{\mb{z}}^{k})-f^* < \epsilon$ and therefore $\liminf_{k\to\infty}\left(f(\overline{\mb{z}}^{k})-f^*\right) \leq0$. Otherwise, there exists some integer $K_1$ such that $f(\overline{\mb{z}}^{k})-f^* \geq \epsilon$ for all $k>K_1$. Then we have the following:
    	\begin{eqnarray*}
    	&&\sum_{k=1}^{\infty}\alpha_k(f(\overline{\mb{z}}^{k})-f^*) \\
    	&=& \sum_{k=1}^{K_1}\alpha_k(f(\overline{\mb{z}}^{k})-f^*) 
    	+\sum_{k=K_1+1}^{\infty}\alpha_k(f(\overline{\mb{z}}^{k})-f^*)\\ 
    	&\geq& \sum_{k=1}^{K_1}\alpha_k(f(\overline{\mb{z}}^{k})-f^*) 
    	+\epsilon\sum_{k=K_1+1}^{\infty}\alpha_k
    	> \infty, 
    	\end{eqnarray*}
    	which is a contradiction. Hence, $$\liminf_{k\to\infty}\left(f(\overline{\mb{z}}^{k})-f^*\right) =0.$$ 
    	{\color{blue}Then there exists a subsequence of $\left\{f(\overline{\mb{z}}^{k})\right\}$, $\left\{{f(\overline{\mb{z}}^{k_l}})\right\}$}such that $\lim_{l\to\infty}{f(\overline{\mb{z}}^{k_l}})=f^*.$ Since $\left\{\overline{\mb{z}}^k\right\}$ is a bounded sequence, we assume without loss of generality that $\lim_{l\to\infty}{\overline{\mb{z}}^{k_l}}=\mb{y}$, where $\mb{y}\in\mc{X}$ (Otherwise we can select a convergent subsequence of $\left\{{\overline{\mb{z}}^{k_l}}\right\}$). Due to the continuity of $f$ over its domain, $\liminf_{l\to\infty}f({\overline{\mb{z}}^{k_l}})=f(\mb{y}).$ Therefore, we have $f(\mb{y})=f^*$ and $\mb{y}\in \mc{X^*}$ due to the uniqueness of the limit point of a sequence. Let $x^*=\mb{y}$. Since $\lim_{l\to\infty}\left\|{\overline{\mb{z}}^{k_l}}-\mb{x}^*\right\|=0$ and $\left\{\left\|\overline{\mb{z}}^k-\mb{x}^*\right\|\right\}$ is convergent, we have $\lim_{k\to\infty}\left\|\overline{\mb{z}}^k-\mb{x}^*\right\|=0$. 
        {\color{blue}
        	Then it follows that:~$\forall $$i\in\mc{V}$,
            \begin{align}
            	\left\|\mb{x}_i^k-\mb{x}^*\right\|
            	=& \left\|\mb{x}_i^k-\overline{\mb{z}}^k+\overline{\mb{z}}^k-\mb{x}^*\right\| \nonumber \\
            	\leq& \left\|\mb{x}_i^k-\overline{\mb{z}}^k\right\|+\left\|\overline{\mb{z}}^k-\mb{x}^*\right\|. \nonumber
            \end{align}
            Therefore, according to previous discussion and Lemma \ref{lem_consensus2}, we have:  
            \begin{equation}
            	\lim\limits_{k\to\infty}\left\|\mb{x}_i^k-\mb{x}^*\right\|=0, \nonumber
            \end{equation}
            which completes the proof.
        }
    	
    \end{proof}
\end{color}

\subsection{Convergence Rate}
Let~$f_K^*:=\min_{0<k\leq K}f(\overline{\mb{z}}^k)$. we have
\begin{align}\label{rate_ineq}
(f_K^*-f^*)\sum_{k=1}^K\alpha_k\leq\sum_{k=1}^K\alpha_k(f(\overline{\mb{z}}^k)-f^*).
\end{align}
By combining Eqs.~\eqref{alphaconsensus} and~\eqref{fconvergence}, Eq.~\eqref{rate_ineq} leads to
\begin{align}
(f_K^*-f^*)\sum_{k=1}^K\alpha_k\leq C_1+C_2\sum_{k=1}^K\alpha_k^2,\nonumber
\end{align}
or equivalently,
\begin{align}\label{rate}
(f_K^*-f^*)\leq\frac{ C_1}{\sum_{k=1}^K\alpha_k}+\frac{C_2\sum_{k=1}^K\alpha_k^2}{\sum_{k=1}^K\alpha_k},
\end{align}
where the constants,~$C_1$ and~$C_2$, are given by
{\small
\begin{equation*}
C_1=\frac{n}{2}\left\|\overline{\mb{z}}^0-\mb{x}^*\right\|^2, \nonumber
\end{equation*}
\begin{color}{blue}
	\begin{equation}
		C_2=\frac{C}{2n}+BD+Fn+3nBD\left(1+\frac{\Gamma}{1-\gamma}\right). \nonumber
	\end{equation}
\end{color}
}
\begin{color}{blue}
We choose the step-size of $\alpha_k=k^{-1/2}$ and use the inequalities as follows:
	\begin{equation}
		\sum_{k=1}^{K}\frac{1}{k} < \ln K +1, \nonumber 
	\end{equation}
	\begin{equation}
		\sum_{k=1}^{K}\frac{1}{\sqrt{k}} > 2(\sqrt{K+1}-1). \nonumber 
	\end{equation}
\end{color} 
The first term in Eq.~\eqref{rate}  leads to
\begin{align}
\frac{ C_1}{\sum_{k=1}^K\alpha_k}<C_1\frac{1/2}{{\color{blue}\sqrt{K+1}}-1}=O\left(\frac{1}{\sqrt{K}}\right),\nonumber
\end{align}
while the second term in Eq.~\eqref{rate} leads to
\begin{align}
\frac{C_2\sum_{k=1}^K\alpha_k^2}{\sum_{k=1}^K\alpha_k} < C_2\frac{
	1+\ln K}{2({\color{blue}\sqrt{K+1}}-1)}=O\left(\frac{\ln K}{\sqrt{K}}\right).\nonumber
\end{align}
It can be observed that $O\left(\frac{\ln K}{\sqrt{K}}\right)$ dominates.

In conclusion, {\color{blue}we achieve the convergence rate of $O(\frac{\ln k}{\sqrt{k}})$ by choosing the step-size of $\frac{1}{\sqrt{k}}$.} This convergence rate is the same as the distributed projected subgradient method,~\cite{cc_nedic}, solving constrained optimization over undirected graphs. Therefore, the restriction of directed graphs does not affect the convergence speed. 
\section{Numerical Results}\label{s4}
Consider the application of D-DPS for solving a distributed logistic regression problem over a directed graph:
\begin{align}
\mb{x}^*=\underset{\mb{x}\in\mc{X}\subset\mbb{R}^p}{\operatorname{argmin}}\sum_{i=1}^n\sum_{j=1}^{m_i}\ln\left[1+\exp\left(-\left(\mb{c}_{ij}^\top\mb{x}\right)y_{ij}\right)\right],\nonumber
\end{align}
where $\mc{X}$ is a small convex set restricting the value of $\mb{x}$ to avoid overfitting. Each agent $i$ has access to $m_i$ training samples, $(\mb{c}_{ij},y_{ij})\in\mbb{R}^p\times\{-1,+1\}$, where $\mb{c}_{ij}$ includes the~$p$ features of the $j$th training sample of agent $i$, and $y_{ij}$ is the corresponding label. This problem can be formulated in the form of P1 with the private objective function $f_i$ being
\begin{align}
f_i(\mb{x})=\sum_{j=1}^{m_i}\ln\left[1+\exp\left(-\left(\mb{c}_{ij}^\top\mb{x}\right)y_{ij}\right)\right],\quad\mbox{s.t. }\mb{x}\in\mc{X}.\nonumber
\end{align}
In our setting, we have $n=10$, $m_i=10$, for all $i$, and $p=100$. The constrained set is described by a ball in $\mbb{R}^p$. We consider the network topology as the digraph shown in Fig.~\ref{graph}.
\begin{figure}[!h]
	\begin{center}
		\noindent
		\includegraphics[width=1.55in]{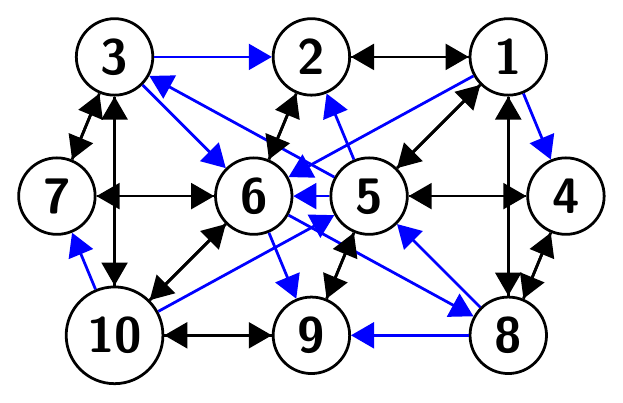}
		\caption{A strongly-connected but non-balanced directed graph.}\label{graph}
	\end{center}
\end{figure}
We plot the residuals~$\frac{\left\|\mb{x}_i^k-\mb{x}^*\right\|_F}{\left\|\mb{x}_i^0-\mb{x}^*\right\|_F}$ for each agent $i$ as a function of $k$ in Fig.~\ref{resid} (Left). In Fig.~\ref{resid} (Right), we show the disagreement between the state estimate of each agent and the accumulation state, and the additional variables of all agents. The experiment follows the results of Lemma \ref{lem_consensus2} that both the disagreements and the additional variables converge to zero. 

We compare the convergence of D-DPS with others related algorithms, Subgradient-Push (SP),~\cite{opdirect_Nedic}, and Weight-Balancing Subgradient Descent (WBSD),~\cite{opdirect_Makhdoumi}, in Fig.~\ref{diff_alg}. Since both SP and WBSD are algorithms for unconstrained problems, we reformulate the problem in an approximate form, 
\begin{align}
f_i(\mb{x})=\lambda\|\mb{x}\|^2+\sum_{j=1}^{m_i}\ln\left[1+\exp\left(-\left(\mb{c}_{ij}^\top\mb{x}\right)y_{ij}\right)\right],\nonumber
\end{align}
where the regularization term $\lambda\|\mb{x}\|^2$ is an approximation to replace the original constrained set to avoid overfitting. It can be observed from Fig.~\ref{diff_alg} that all three algorithms have the same order of convergence rate. However, D-DPS is further suited for the constrained problems.
\begin{figure}[!h]
\centering
\subfigure{\includegraphics[width=1.71in]{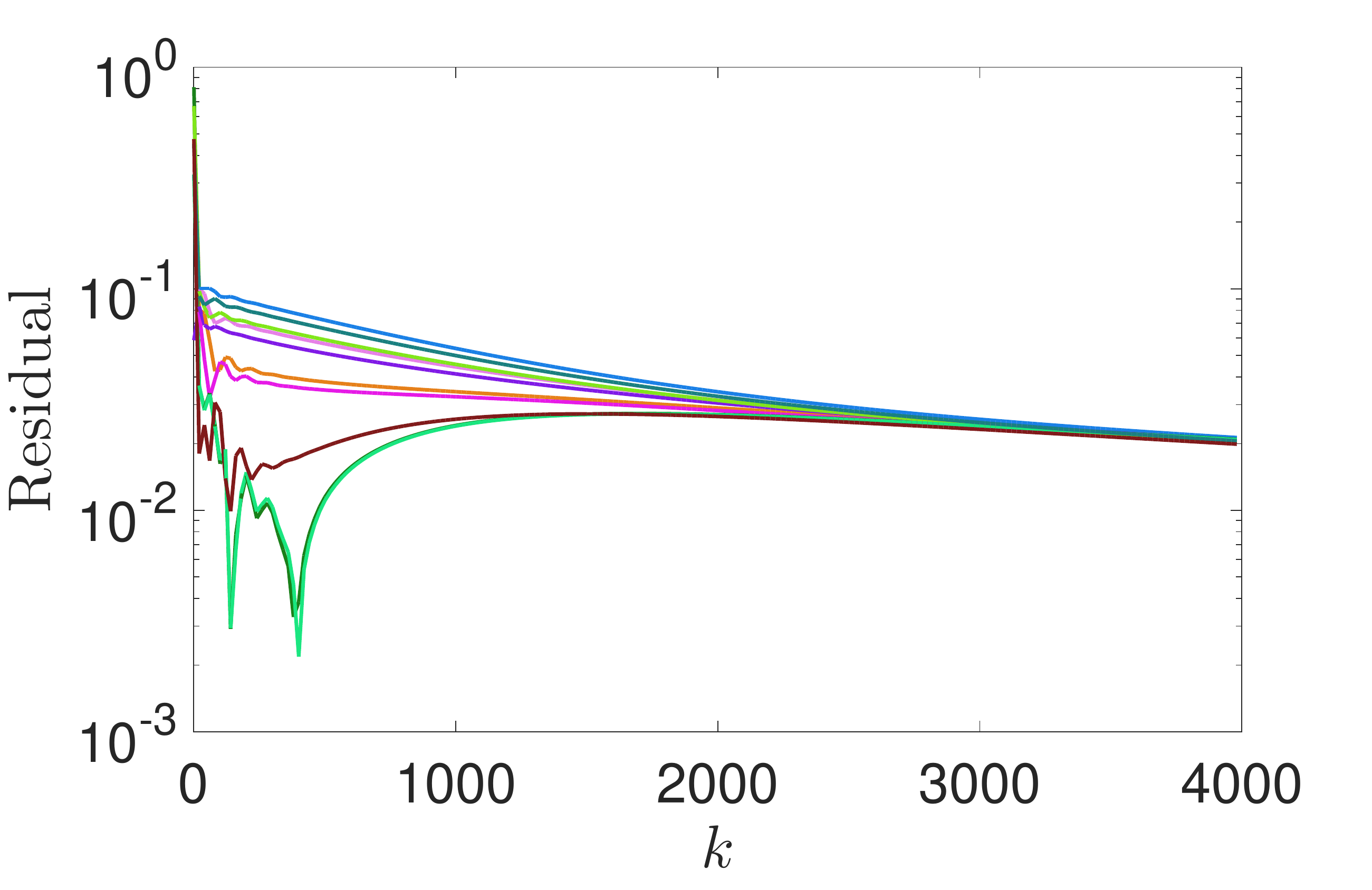}}
\subfigure{\includegraphics[width=1.73in]{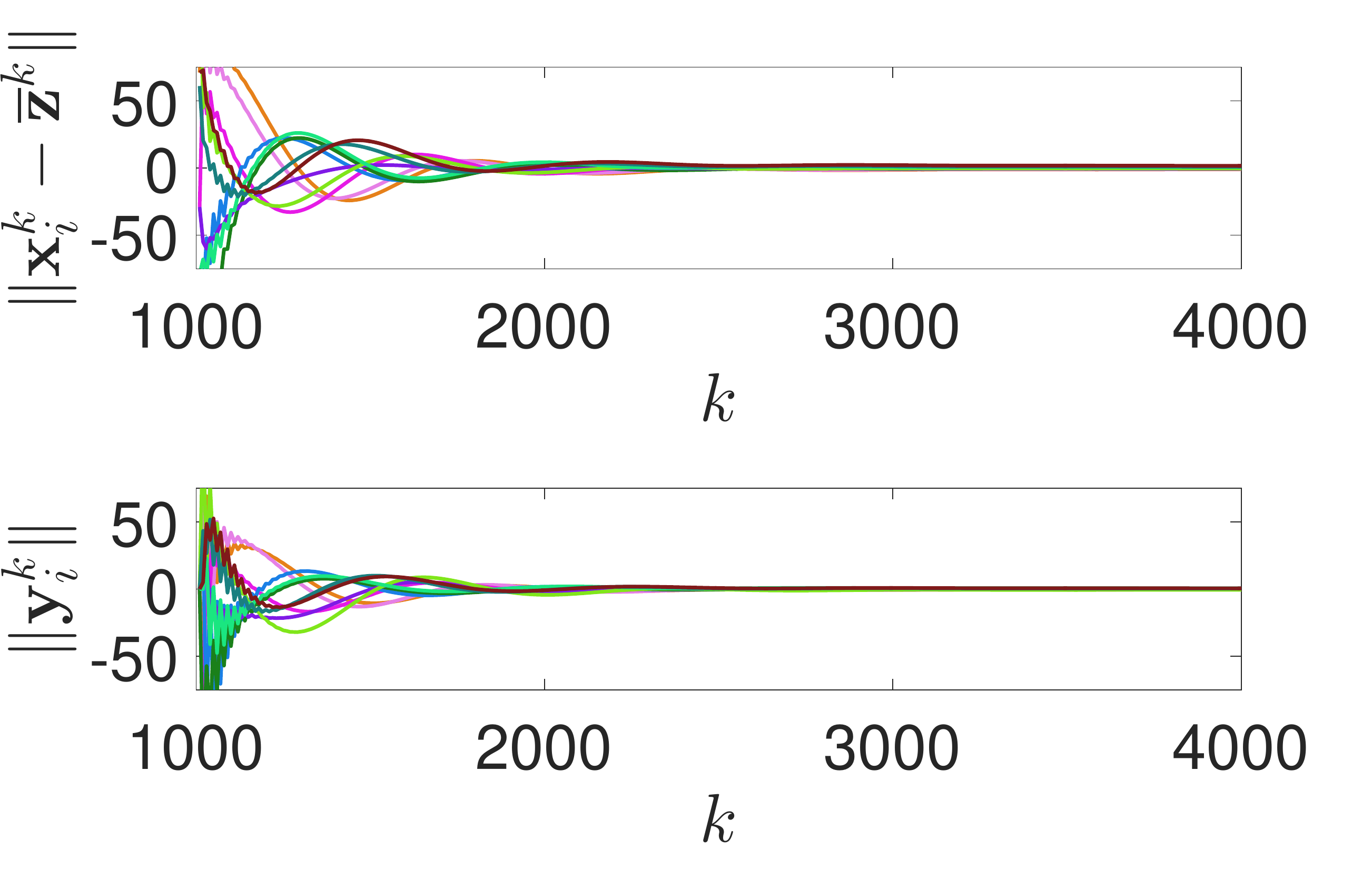}}
\caption{(Left) D-DPS residuals at $10$ agents. (Right) Sample paths of states,~$\|\mb{x}_i^k-\overline{\mb{z}}^k\|$, and~$\|\mb{y}_i^k\|$, for all agents.}
\label{resid}
\end{figure}
\begin{figure}[!h]
	\begin{center}
		\noindent
		\includegraphics[width=2.8in]{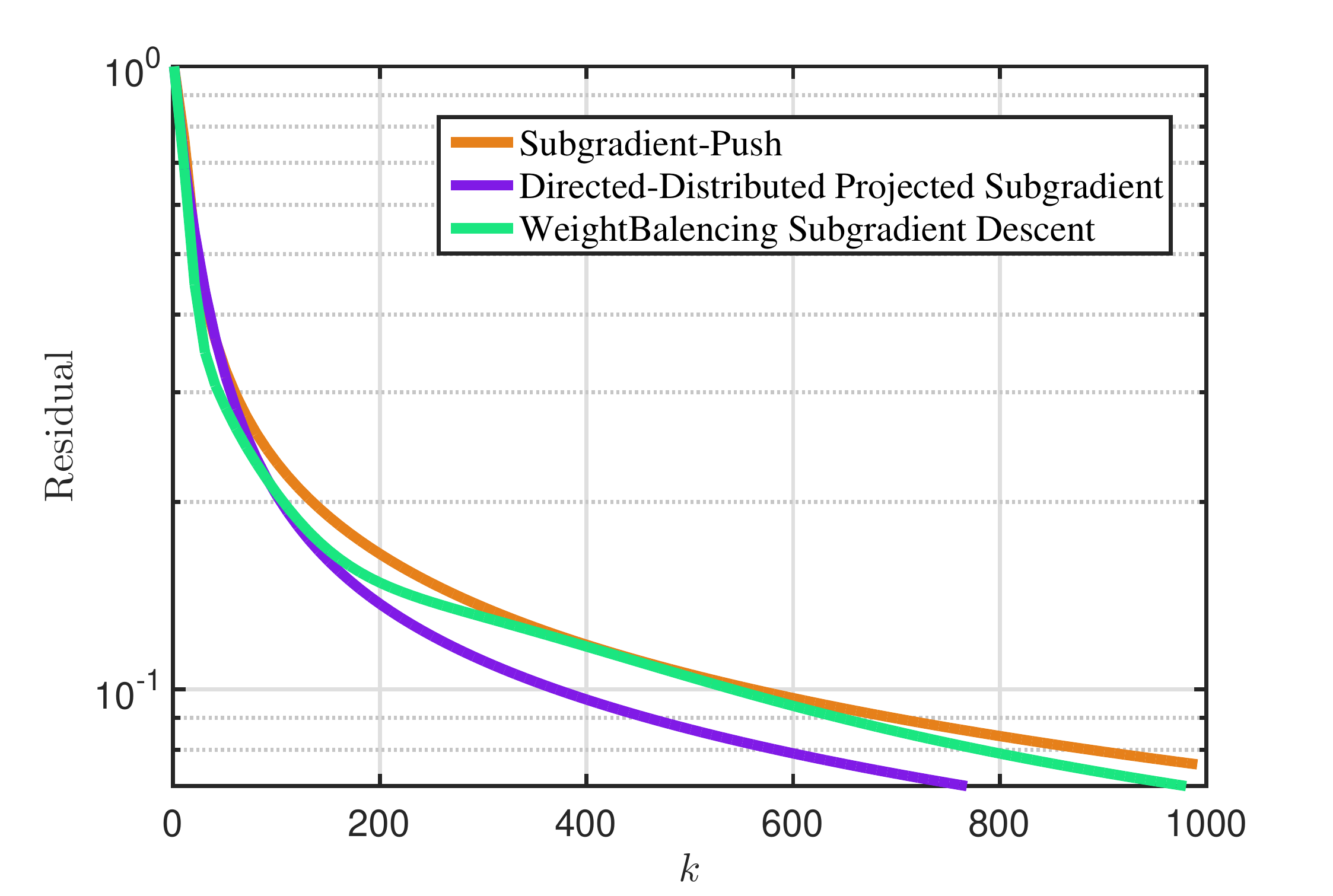}
		\caption{Convergence comparison between different algorithms.}\label{diff_alg}
	\end{center}
\end{figure}

\section{Conclusions}\label{s5}
In this paper, we present a distributed solution, D-DPS, to the \emph{constrained} optimization problem over \emph{directed} multi-agent networks, where the agents' goal is to collectively minimize the sum of locally known convex functions. Compared to the algorithm solving over undirected networks, the D-DPS simultaneously constructs a row-stochastic matrix and a column-stochastic matrix instead of only a doubly-stochastic matrix. This enables all agents to overcome the asymmetry caused by the directed communication network. We show that D-DPS converges to the optimal solution and the convergence rate is $O(\frac{\ln k}{\sqrt{k}})$, where $k$ is the number of iterations. In future, we will consider solving the distributed constrained optimization problems over directed and time-varying graph under, possibly, asynchronous information exchange. 

{
\footnotesize
\bibliographystyle{IEEEbib}
\bibliography{ref}
}

\end{document}